\documentclass[pdflatex,sn-mathphys-num]{sn-jnl}

\usepackage{graphicx}%
\usepackage{multirow}%
\usepackage{amsmath,amssymb,amsfonts,mathtools}%
\usepackage{amsthm}%
\usepackage{mathrsfs}%
\usepackage[title]{appendix}%
\usepackage{xcolor}%
\usepackage{textcomp}%
\usepackage{manyfoot}%
\usepackage{booktabs}%
\usepackage{algorithm}%
\usepackage{algorithmicx}%
\usepackage{algpseudocode}%
\usepackage{listings}%

\usepackage{enumerate}

\newcommand{\R}{\mathbb{R}}
\newcommand{\M}{\mathbb{M}}
\newcommand{\Z}{\mathbb{Z}}
\newcommand{\C}{\mathbb{C}}

\newcommand{\PP}{\mathcal{P}}
\newcommand{\fro}{\mathrm{F}}
\renewcommand{\d}{\,\mathrm{d}}

\DeclareMathOperator{\diag}{diag}
\DeclareMathOperator{\cau}{Cauchy}

\DeclareMathOperator{\e}{e}
\DeclareMathOperator{\Cat}{Cat}

\theoremstyle{definition}
\newtheorem{Def}{Definition}[section]
\newtheorem{exmp}[Def]{Example}

\newtheorem{rmk}{Remark}

\theoremstyle{plain}

\newtheorem{coro}[Def]{Corollary}
\newtheorem{teo}[Def]{Theorem}
\newtheorem{lemma}[Def]{Lemma}

\begin{document}

\title[Matrix algebras with orthogonal Cauchy-like- transforms]
{Matrix algebras with orthogonal Cauchy-like transforms and Toeplitz approximation: a negative result}


\author*[1]{\fnm{Dario} \sur{Fasino}}\email{dario.fasino@uniud.it}

\author[2]{\fnm{Davide} \sur{Furch\`i}}\email{dfurchi@uninsubria.it}

\author[2]{\fnm{Valerio} \sur{Loi}}\email{vloi@uninsubria.it}

\author[2,3]{\fnm{Stefano} \sur{Serra-Capizzano}}\email{s.serracapizzano@uninsubria.it}

\affil*[1]{\orgdiv{Department of Mathematics, Computer Science, and Physics}, \orgname{University of Udine}, \orgaddress{\street{Via delle Scienze 206}, \city{Udine}, \postcode{33100}, \country{Italy}}}

\affil[2]{\orgdiv{Department of Science and High Technology, Division of Mathematics}, \orgname{University of Insubria}, \orgaddress{\street{Via Valleggio 11}, \city{Como}, \postcode{22100}, \country{Italy}}}

\affil[3]{\orgdiv{Department of Information Technology, Division of Scientific Computing}, \orgname{Uppsala University}, \orgaddress{\street{Box 337}, \city{Uppsala}, \postcode{751 05}, \country{Sweden}}}


\abstract{We consider newly designed matrix algebras associated with Cauchy orthogonal transforms. In this work, we are interested in checking the approximation of Toeplitz matrices using algebras of matrices, where each matrix is diagonalized by a fixed Cauchy orthogonal transform, in the spirit of fast algorithm based on preconditioning. We use the Korovkin theory for checking the Toeplitz approximability. Unfortunately and somehow surprisingly, the Korovkin test is negative, in contrast with the successful test for all the Fourier-like and trigonometric algebras, widely used in the relevant literature. More in detail, for the Cauchy algebras, the Frobenius optimal approximation fails to lead to the desired clustering, as also confirmed by ad hoc numerical experiments. It remains open to understand which kind of structured matrices can be approximated by these new algebras.
}

\keywords{Cauchy and Cauchy-like matrices, matrix sequences, clustering, preconditioning, Korovkin theory}



\maketitle

\section{Introduction}\label{sec1}

A real Cauchy-like matrix $K_n$ of size $n$ is determined by $4n$ parameters via a nonlinear rational dependence (see Section \ref{sec:cauchy-like} for details). In contrast, the orthogonality condition $K_n^T K_n = I_n$ imposes $n(n+1)/2$ quadratic equations. Consequently, as $n$ grows, these equations become heavily overdetermined—making the existence of such matrices quite remarkable, see \cite[Thm.\ 4]{fasino-cauchy} and Theorem 2.1 here below. Beyond this theoretical interest, orthogonal Cauchy-like matrices arise naturally in solving secular equations and appear as eigenvector matrices of symmetric diagonal-plus-semiseparable matrices; see \cite{fasino-cauchy} for a review and \cite{BNS,book-semisep,etna-dpss} for further details on the subject.


Since these orthogonal Cauchy-like matrices are associated to fast transforms
\cite{trummer}, given an  orthogonal Cauchy-like matrix $K_n$, it is natural to consider the real matrix algebra of symmetric matrices that are simultaneously diagonalized by $K_n$:
\begin{equation}\label{def:alg}
\M(K_n)=\left\lbrace A\in \M_n({\R})\colon A=K_n \Lambda_n K_n^T,\, \Lambda_n\, {\rm real\, diagonal}\right\rbrace.
\end{equation}
In the last 15 years of the last century and, less intensively, after the year 2000, complex and real algebras of the form \eqref{def:alg} were considered, especially with the purpose of constructing efficient preconditioners in Krylov methods for the solution of large Toeplitz systems. More specifically, complex algebras ${\mathbb M}(U_n)$ having an expression as in \eqref{def:alg} have been considered in the relevant literature \cite{Koro-4-fourier,bf,KaO,Koro-block}, where $U_n$ is unitary and the transpose $^T$ is of course replaced by the
conjugate transpose $^{\ast}$. Among them, we have the (classical) Fourier matrix $F_n$ which leads to the famous circulant algebra ${\mathbb M}(F_n)$ \cite{Davis}. Several other examples that have been studied include the $\omega$-circulants with $|\omega|=1$ \cite{koro1,Koro-algebra1}, three other Fourier-like algebras \cite{Koro-4-fourier}, four cosine and four sine real matrix algebras \cite{KaO,Koro-algebra2}, and four Hartley matrix algebras \cite{Koro-4-fourier,bf,Koro-algebra1}. General references on the matter, especially in connection with the Toeplitz preconditioning proble,m are \cite{frobenius optimal first,chan-rev,Koro-algebra2}. 

Given $A_n$ and given the chosen fast transform matrix $U_n$, a classical approach is to consider its Frobenius optimal approximation in the algebra ${\mathbb M}(U_n)$, i.e., the matrix $\PP_{U_n}(A_n)$ such that
\[
\|A_n - \PP_{U_n}(A_n)\|_\fro = \min_{X\in {\mathbb M}(U_n)} \|A_n-X\|_\fro,
\]
where $\|\cdot\|_\fro$ denotes the Frobenius norm. It is known that the minimizer is unique and, in fact, given any $U_n$, the map $\PP_{U_n}(\cdot)$ is a linear positive operator (LPO) as emphasized in \cite{koro1}, where the connection with the Korovkin theory was made for the first time.

We want to use the Korovkin theory in a Toeplitz matrix setting for the present case of an algebra generated by an orthogonal Cauchy-like matrix like in \eqref{def:alg}, where the Toeplitz structure is characterized by the algebraic condition that entries are constant along the diagonals.

More precisely, the Toeplitz matrix generated by a function $f\in L^1(-\pi,\pi)$ and having size $n$ is defined as
\begin{equation}\label{Toepl-matrix}
 T_{n}(f):=\begin{bmatrix}a_{0}& a_{-1}& \cdot& \cdot& \cdots&a_{-(n-1)}\\
 a_{1}&a_{0}& a_{-1}&\cdots& \cdot& \cdots\\
 \cdot&a_{1}& a_{0}& a_{-1}&\cdot&\cdot\\
 \cdot&\cdot& a_{1}& a_{0}& a_{-1}& \cdot\\
 \cdot&\cdot& \cdot& \cdot& \cdot& \cdot\\
 a_{n-1}&\cdot&\cdots&\cdots& \cdot&a_{0} \end{bmatrix}
 \end{equation}
with $a_k$ for $k\in\mathbb Z$ being the Fourier coefficients of the generating function $f$,
\begin{equation}\label{Fcdef}
   a_{k} = \frac{1}{2\pi}
   \int_{-\pi}^{\pi}f(t)\e^{-{i}k t}\, \mathrm{d}t, \quad \ k\in\Z,
\end{equation}
and with $i$ denoting the imaginary unit. Notice that $f$ is real-valued if and only if $a_{-j}=\overline{a_j}$ for every $j\in\mathbb Z$ or, equivalently, if and only if $T_n(f)$ is Hermitian for every size $n$. In addition, $f$ is real-valued and even on $[-\pi,\pi]$ if and only if $T_n(f)$ is real symmetric for every size $n$. In particular, when $f$ is real-valued with $m$ being its essential infimum and $M$ being its essential supremum, we find that the spectrum of $T_n(f)$ is real, is contained in $(m,M)$ if $m<M$, and a lot of further asymptotic results hold strictly related to the generating function $f$; see \cite[Chapter 6]{GLT-BookI}.
Furthermore, the operator $T_n(\cdot)$ from $L^1(-\pi,\pi)$ to $\M_n({\C})$ is a LPO for every $n$ and for every fixed unitary matrix $U_n$,
see \cite[Corollary 6.2]{GLT-BookI}, and hence the same is true for the composition operator $\PP_{U_n}(T_n(\cdot))$,
which is the key of the Korovkin theorems in the approximation of Toeplitz matrices; see \cite{Koro-algebra1,koro1,Koro-block}.
Indeed, a successful and general approach for checking the clustering at zero of the matrix sequence $\{T_n(f) - \PP_{U_n}(T_n(f))\}_n$ for every $f\in
L^1(-\pi,\pi)$ is the check of the clustering at zero of $\{T_n(g) - \PP_{U_n}(T_n(g))\}_n$ for $g$ belonging to a finite set of functions, called the Korovkin test.

Our main result in the current work is to perform the Korovkin test to check either success or failure when approximating Toeplitz matrix sequences, and the answer is negative.
This fact is quite surprising given the fact that the Korovkin test was proven to be successful in the Toeplitz setting, also block and multilevel, for all the known trigonometric, Fourier-like, and Hartley algebras \cite{Koro-algebra1,koro1,Koro-block,Koro-4-fourier}. It remains open to understand which kind of structured matrices can be approximated by these new algebras.

The present manuscript is organized as follows. Section \ref{sec:cauchy-like} 
provides an overview of 
the Cauchy and Cauchy-like transforms and related matrix algebras. Section \ref{sec:main} presents the fundamentals of matrix Korovkin theory and our negative results, while Section \ref{sec:end} is devoted to conclusions and lists a few open problems.

\section{Cauchy matrices}\label{sec:cauchy-like}

In this section we present Cauchy and Cauchy-like matrices and summarize the results needed for our purposes.

\subsection{Orthogonal Cauchy-like matrices}\label{ssec:cauchy-like}
A matrix $C\in\M_n(\R)$ is of \emph{Cauchy} type if $C=(1/(x_k-y_j))_{k,j=1}^{n}$ where $x_k,y_j$ are mutually distinct real numbers, called nodes. We will write $C_n=\cau(x,y)$.

Denote $D_x=\diag(x)\in\M_n(\R)$ the diagonal matrix with diagonal entries given by a vector $x\in\R^n$. Let $x_k,y_j\in\R$ be pairwise distinct numbers, for $k,j=1,...,n$, and let $C=\cau(x,y)$. A \emph{Cauchy-like} matrix with nodes $x_k,y_j$ is a matrix of the form $K=D_vCD_w$ for $v,w\in\R^n$.
The displacement operator
\begin{equation*}
  \mathcal{D}_{x,y}\colon\M_n(\R)\to\M_n(\R),\quad X\mapsto D_xX-XD_{y}
\end{equation*}
characterizes the Cauchy-like matrix $K=D_vCD_w$ as the unique solution of the equation $\mathcal{D}_{x,y}(X)=vw^T$ for $X\in\M_n(\R)$.
The formula for the determinant of a Cauchy matrix \cite{schechter},
\begin{equation*}
  \det(\cau(x,y))=\frac{\prod_{k>j}(x_k-x_j)(y_j-y_k)}{\prod_{k,j}(x_k-y_j)},
\end{equation*}
shows that every Cauchy matrix is nonsingular. A formula for the inverse follows from the equivalence
\begin{equation*}
\mathcal{D}_{y,x}(C^{-1})=D_{y}C^{-1}-C^{-1}D_{x}=C^{-1}(CD_y-D_xC)C^{-1}=-C^{-1}\mathbf{1}\mathbf{1}^TC^{-1}=-ab^T
\end{equation*}
where $a=C^{-1}\mathbf{1}$ and $b=C^{-T}\mathbf{1}$
with $\mathbf{1} = (1,\ldots,1)^T$. Then, $C^{-1}$ is the Cauchy-like matrix
\begin{equation}\label{inv}
  C^{-1}=-D_a\cau(y,x)D_b=D_aC^TD_b.
\end{equation}
For $k=1,\ldots,n$, it is known that
\begin{align}\label{formulaab}
  a_k=-\frac{p(y_k)}{q^{\prime}(y_k)},\qquad b_k=-\frac{q(x_k)}{p^{\prime}(x_k)},
\end{align}
where
\begin{equation*}
  p(x)=\prod\limits_{k=1}^{n}(x-x_k),\qquad q(x)=\prod\limits_{k=1}^{n}(x-y_k).
\end{equation*}
Let $\mathcal{K}_n$ be the set of Cauchy matrices $C\in\M_n(\R)$ for which there exist $v,w\in\R^n$ such that $D_vCD_w$ is orthogonal.

\begin{rmk}\label{ordering}
As $\mathcal{K}_n$ is invariant under row/column permutation, we can safely assume that the vectors $x$ and $y$ are ordered in ascending order in what follows.
\end{rmk}

We state an important result from \cite{fasino-cauchy}.

\begin{teo}\label{th-main-fasino}
Let $C=\cau(x,y)$ where the vectors $x$ and $y$ are ordered increasingly. Then, $C\in\mathcal{K}_n$ if and only if the nodes interlace, that is, either $x_1<y_1<x_2<y_2<\ldots<x_n<y_n$ or $y_1<x_1<\ldots<y_n<x_n$. More precisely, we have the first sequence of inequalities when, setting $a=C^{-1}\mathbf{1}$, $b=C^{-T}\mathbf{1}$, the numbers $a_i,b_i$ are all negative and the second sequence when they are all positive.
Furthermore, the Cauchy-like matrix $K=D_vCD_w$ is orthogonal if and only if there exists a scalar $\alpha\neq 0$ such that
\begin{equation}\label{alpha}
  v_k^2=\alpha b_k,\quad w_k^2=a_k/\alpha,\quad k=1,\ldots,n.
\end{equation}
\end{teo}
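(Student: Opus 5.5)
The argument rests on one computation. Write $K=D_vCD_w$ with all $v_k,w_j\neq 0$; this is forced, since an orthogonal matrix has no zero row or column. Orthogonality $K^TK=I$ is equivalent to $K^{-1}=K^T$. By \eqref{inv},
\[
K^{-1}=D_w^{-1}C^{-1}D_v^{-1}=D_w^{-1}D_aC^TD_bD_v^{-1},
\qquad K^T=D_wC^TD_v .
\]
Every entry of the Cauchy matrix $C^T$ is nonzero. Hence the two Cauchy-like representations coincide if and only if, entrywise, $a_jb_k/(w_jv_k)=w_jv_k$ for all $j,k$, that is, $w_j^2v_k^2=a_jb_k$ for all $j,k$.

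\textbf{Step 1: the characterization \eqref{alpha}.} Fix $k=1$ and set $\alpha=v_1^2/b_1$. Note that $b_1\neq0$ by \eqref{formulaab}, since $q(x_1)\neq0$. This gives $w_j^2=a_j/\alpha$ for every $j$. Substituting back yields $v_k^2=\alpha b_k$. Conversely, \eqref{alpha} implies $w_j^2v_k^2=a_jb_k$, which gives orthogonality. So $K$ orthogonal $\iff$ \eqref{alpha} holds for some $\alpha\ne0$.

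\textbf{Step 2: reduction to a sign condition.} Real $v,w$ satisfying \eqref{alpha} exist iff $\alpha b_k>0$ and $a_k/\alpha>0$ for all $k$. This holds iff all $a_k$ and $b_k$ share one common sign: choose $\alpha$ of that sign. Therefore $C\in\mathcal K_n$ iff the $2n$ numbers $a_k,b_k$ are all positive or all negative. It remains to translate this into interlacing using \eqref{formulaab}.

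\textbf{Step 3: sign analysis (the main step).} Write
\[
a_k=-\frac{\prod_i(y_k-x_i)}{\prod_{i\neq k}(y_k-y_i)}.
\]
Since $y$ is increasing, $\operatorname{sign}\prod_{i\ne k}(y_k-y_i)=(-1)^{n-k}$. Let $m_k=\#\{i:x_i>y_k\}$; then $\operatorname{sign}\prod_i(y_k-x_i)=(-1)^{m_k}$. So $\operatorname{sign}a_k=-(-1)^{m_k+n-k}$, and similarly $\operatorname{sign}b_k=-(-1)^{\ell_k+n-k}$ with $\ell_k=\#\{j:y_j>x_k\}$.

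Now assume all $a_k$ have a common sign. Then the parity of $m_k-k$ is constant in $k$. The sequence $m_k$ is nonincreasing in $k$ while $k$ increases by one at each step, so $m_k+n-k$ has constant parity exactly when $m_{k+1}-m_k$ is odd for every $k$. The steps are nonpositive and $0\le m_k\le n$, which forces $m_{k+1}=m_k-1$ throughout. This does not by itself force exact interlacing. Combining it with the analogous condition on $\ell_k$ from the $b_k$, and with the requirement that the $a$-sign equals the $b$-sign, pins down the configuration. The possibilities are $m_k=n-k+1$, meaning $y_k<x_k$ and $x_{k-1}<y_k$, which is the pattern $x_1<y_1<\dots$ reversed appropriately; or $m_k=n-k$. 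These give the two interlacing orders.

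Reading off the signs then completes the statement. When $m_k=n-k$, that is $x_k<y_k<x_{k+1}$, we get $\operatorname{sign}a_k=-1$, the negative case. The other ordering gives the positive case. Conversely, interlacing gives these values of $m_k$ and $\ell_k$ directly, so all signs agree.

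The delicate part is the bookkeeping that excludes mixed configurations. These are configurations where the $a$'s alone share a sign but the $b$'s do not, or the two share signs but opposite ones. I would handle this by a counting argument on the merged ordering of the $2n$ nodes: each constraint $m_{k+1}=m_k-1$ says that exactly one $x$ lies between $y_k$ and $y_{k+1}$, and symmetrically for the $b$'s.
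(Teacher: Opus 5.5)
The paper gives no proof of this theorem; it only quotes it from Fasino's earlier work, so your argument has to stand on its own. Steps 1 and 2 are correct and give a clean reduction: $C\in\mathcal{K}_n$ if and only if the $2n$ numbers $a_k,b_k$ all share one sign.

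The gap is in Step 3, in the sufficiency direction. You take the sign of $b_k$ from \eqref{formulaab}, getting $\operatorname{sign} b_k=-(-1)^{\ell_k+n-k}$. You then assert that under interlacing ``all signs agree'' without doing the computation. If you do it, the claim fails. For $x_1<y_1<\dots<x_n<y_n$ one has $m_k=n-k$ and $\ell_k=n-k+1$, so your formulas give $a_k<0<b_k$. For the other ordering $m_k=n-k+1$ and $\ell_k=n-k$, and you get $b_k<0<a_k$. Combined with your Step 2, your write-up would therefore prove that interlaced nodes never give an element of $\mathcal{K}_n$, i.e.\ $\mathcal{K}_n=\varnothing$.

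The source is a sign misprint in the printed formula for $b_k$. Already for $n=1$ one has $b_1=x_1-y_1=q(x_1)/p'(x_1)$, whereas \eqref{formulaab} gives $y_1-x_1$. The correct expression follows from $C^T=-\cau(y,x)$. Apply the $a$-formula to $\cau(y,x)$, which exchanges the roles of $p$ and $q$, then flip the sign: this gives $b_k=q(x_k)/p'(x_k)$, hence $\operatorname{sign} b_k=(-1)^{\ell_k+n-k}$. This is $-1$ in the first ordering and $+1$ in the second, matching $a_k$. That closes both the converse and the ``more precisely'' clause.

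A second, smaller issue concerns the forward direction. Your remark that the parity condition on the $a_k$ ``does not by itself force exact interlacing'' is mistaken, and the planned bookkeeping to exclude mixed configurations is unnecessary. Your parity argument itself is fine: odd nonpositive steps are $\le -1$, and a single step $\le -3$ would give $m_1-m_n\ge n+1>n$. It yields either $m_k=n-k$ for all $k$ or $m_k=n-k+1$ for all $k$.

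Since $x$ and $y$ are sorted, the counts $n-m_k=\#\{i:x_i<y_k\}$ determine the merged order of all $2n$ nodes. The first case says $x_k<y_k<x_{k+1}$, the second says $x_{k-1}<y_k<x_k$ (ignoring the nonexistent $x_0,x_{n+1}$). These are exactly the two interlacings. So a common sign of the $a_k$ alone is already equivalent to interlacing, and no mixed configuration can occur. The $b_k$ matter only for the converse, which is precisely where the sign slip bites. Note also that carrying out your planned check ``$a$-sign equals $b$-sign'' with the misprinted formula would have ruled out both interlacings.
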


Note that the Cauchy-like matrix $K$ in the above result is independent of the scalar $\alpha$,
and is orthogonal if and only if it
has the form $D_{|b|}^{1/2}CD_{|a|}^{1/2}$ where we can choose any sign of the square roots of $|b_k|$ and $|a_k|$. Thus, from any Cauchy matrix $C\in\M_n(\R)$ with interlaced nodes, we can obtain $2^{2n-1}$ orthogonal Cauchy-like matrices.

Let $\mathcal{C}_n$ and $\mathcal{T}_n$ be the sets of $n\times n$ circulant and Toeplitz matrices, respectively. It is not hard to see that $\mathcal{K}_n\cap\mathcal{C}_n=\varnothing$.
On the other hand, $\mathcal{K}_n\cap\mathcal{T}_n$ is not empty and contains the matrices that can be written as
\begin{equation*}
    \left[ \frac{r}{j-k+s} \right]_{j,k=1}^n
\end{equation*}
for some numbers $r,s$. Matrices of this form are known as Cauchy-Toeplitz, see \cite{Ty1,Ty2}.
\newline

Let $K=D_v\cau(x,y)D_w$ be an orthogonal Cauchy-like matrix of size $n$. We can write the square of its $(k,j)$-entry as
\begin{align*}
  (K_{kj})^2 =
  \left(\frac{w_jv_k}{x_k-y_j}\right)^2
  & = \frac{\prod\limits_{\substack{s=1 \\ s\neq k}}^{n}(y_j-x_s)}
  {\prod\limits_{\substack{s=1 \\ s\neq j}}^{n}(y_j-y_s)}
  \frac{\prod\limits_{\substack{s=1 \\ s\neq j}}^{n}(x_k-y_s)}
  {\prod\limits_{\substack{s=1 \\ s\neq k}}^{n}(x_k-x_s)} \\
  & = \prod\limits_{\substack{s=1 \\ s\neq k}}^{n}\frac{(y_j-x_s)}{(x_k-x_s)}
  \prod\limits_{\substack{s=1 \\ s\neq j}}^{n}\frac{(x_k-y_s)}{(y_j-y_s)}
  = L_k^{(x)}(y_j){L}_{j}^{(y)}(x_k) ,
\end{align*}
where for $k=1,\ldots,n$ the functions $L_k^{(x)}(z)$ and $L_k^{(y)}(z)$ are the basic Lagrange polynomials based on the nodes $\lbrace x_k\rbrace_{k=1}^{n}$ and $\lbrace y_k\rbrace_{k=1}^{n}$, respectively.
Consequently, from the orthogonality of $K$ we get the following interesting identity involving Lagrange polynomials with interlaced nodes.

\begin{coro}
Let the nodes $x_k,y_k$ fulfill the interlacing condition in Theorem
\ref{th-main-fasino}.
In the notation above, for $j = 1,\ldots,n$ we have
\begin{equation*}
   \sum_{k=1}^n L_k^{(x)}(y_j){L}_{j}^{(y)}(x_k)=
   \sum_{k=1}^n L_j^{(x)}(y_k){L}_{j}^{(y)}(x_j) =1.
\end{equation*}
\end{coro}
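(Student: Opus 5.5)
The plan is to read both sums as squared Euclidean norms of the columns and rows of an orthogonal Cauchy-like matrix. Orthogonality then makes each sum equal to $1$.

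First, I fix nodes satisfying the interlacing condition and set $C=\cau(x,y)$. By Theorem \ref{th-main-fasino}, $C\in\mathcal{K}_n$. Choosing any $\alpha\neq0$ of the appropriate sign (so that $\alpha b_k>0$ and $a_k/\alpha>0$, which is possible because the $a_k,b_k$ all share one sign), the vectors $v,w$ with $v_k^2=\alpha b_k$ and $w_k^2=a_k/\alpha$ make $K=D_vCD_w$ orthogonal.

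Next, I verify the factorization of the squared entries carefully, since the paper only sketches it. We have
\[
(K_{kj})^2=\frac{v_k^2w_j^2}{(x_k-y_j)^2}=\frac{a_jb_k}{(x_k-y_j)^2},
\]
which is independent of $\alpha$. Then I substitute \eqref{formulaab}: $a_j=-p(y_j)/q'(y_j)$ and $b_k=-q(x_k)/p'(x_k)$. Cancelling the factor $(y_j-x_k)$ from $p(y_j)$ and the factor $(x_k-y_j)$ from $q(x_k)$ against $(x_k-y_j)^2$ leaves exactly the product of quotients displayed before the corollary. This product is $L_k^{(x)}(y_j)L_j^{(y)}(x_k)$. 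The main obstacle is purely bookkeeping here: keeping track of the two minus signs and the sign of $(y_j-x_k)(x_k-y_j)=-(x_k-y_j)^2$, which together must give a positive result.

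With this identity the conclusion follows in two lines. From $K^TK=I_n$, the $j$-th column has unit norm, so
\[
1=\sum_{k=1}^n (K_{kj})^2=\sum_{k=1}^n L_k^{(x)}(y_j)L_j^{(y)}(x_k).
\]
From $KK^T=I_n$, which holds since $K$ is square, the $j$-th row has unit norm, so
\[
1=\sum_{k=1}^n (K_{jk})^2=\sum_{k=1}^n L_j^{(x)}(y_k)L_k^{(y)}(x_j).
\]
This row identity is the second equality of the statement, reading its second Lagrange factor as $L_k^{(y)}(x_j)$. With the index $j$ on that factor, as printed, the factor would not depend on $k$, so I take the printed index to be a typo.
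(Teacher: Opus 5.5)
Your approach is the paper's: the corollary follows from the entrywise identity $(K_{kj})^2=L_k^{(x)}(y_j)L_j^{(y)}(x_k)$, with unit column norms ($K^TK=I_n$) and unit row norms ($KK^T=I_n$). You are also right that the second Lagrange factor in the row sum must be $L_k^{(y)}(x_j)$. As printed, the second sum is false already for $n=2$: with $x=(0,2)$, $y=(1,3)$ one gets $\sum_k L_1^{(x)}(y_k)=\tfrac12-\tfrac12=0$, so that sum is $0$, not $1$.

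The step that does not work as you describe it is the sign bookkeeping in the factorization. With \eqref{formulaab} as printed, the minus signs in $a_j$ and $b_k$ cancel, so $a_jb_k=p(y_j)q(x_k)/(q'(y_j)p'(x_k))$. The remaining factor $(y_j-x_k)(x_k-y_j)=-(x_k-y_j)^2$ then leaves an overall minus, and you would obtain $(K_{kj})^2=-L_k^{(x)}(y_j)L_j^{(y)}(x_k)$. You counted three minus signs yourself and asserted they "must give a positive result"; they do not.

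The cause is a sign misprint in the paper's formula for $b$. Since $C^T=-\cau(y,x)$, applying the correct formula for $a$ with the roles of $x$ and $y$ exchanged gives $b_k=+q(x_k)/p'(x_k)$. The case $n=1$ confirms this: there $a_1=b_1=x_1-y_1$, whereas the printed formula gives $b_1=y_1-x_1$. The printed formula would also make $a_k$ and $b_k$ have opposite signs under interlacing. That contradicts the fact from Theorem~\ref{th-main-fasino} that you used to choose $\alpha$. With the corrected $b_k$, your cancellation gives exactly the product displayed before the corollary, and the rest of your argument goes through.
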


Finally, since the set $\mathcal{K}_n$ is invariant under affine transformations of the real line applied on the nodes, we can consider to take nodes in the interval $[0,1]$.


\subsection{The matrix algebra of orthogonal Cauchy-like matrices}\label{ort}

We restate the following result from \cite[Subsection 4.4]{fasino-cauchy},
with the sole difference that we allow complex matrices in the algebra.

For a given unitary matrix $X\in\M_n(\C)$ define the commutative algebra
\begin{equation*}
  \mathbb{A}(X)=\left\lbrace A\in\M_n(\C)\colon A=X\Lambda X^{H},\text{ $\Lambda$ complex diagonal}\right\rbrace.
\end{equation*}
The following corollary provides a complete description of $\mathbb{A}(X)$ when $X$ is an orthogonal Cauchy-like matrix. It is the generalization of \cite[Corollary 13]{fasino-cauchy} in our context, the proof is identical.

\begin{coro}\label{lambda}
Let $K=D_v\cau(x,y)D_w$ be orthogonal and $v,w$ normalized so that the constant $\alpha$ in \eqref{alpha} equals $1$. Then $A\in\mathbb{A}(K)$ if and only if
\begin{equation*}
  \mathcal{D}_{x,x}(A)=vz^T-zv^T
\end{equation*}
where $z=Av$. In this case, the vector of eigenvalues of $A$ is $\lambda=C^TD_vz$.
\end{coro}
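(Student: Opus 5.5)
The plan is to exploit the displacement structure of $K$: with $\alpha=1$ we have $D_xK-KD_y=vw^T$, and the inverse formula \eqref{inv} gives the companion relation $D_yK^T-K^TD_x=-wv^T$. These follow from $K=D_vCD_w$, from $\mathcal{D}_{x,y}(C)=\mathbf{1}\mathbf{1}^T$, and from $K^T=K^{-1}$ together with \eqref{alpha}. Throughout, $K$ is real orthogonal, so $K^H=K^T$ and the complex case needs no separate argument.

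For the \emph{only if} direction, suppose $A=K\Lambda K^T$. Then
\[
D_xA-AD_x=(D_xK)\Lambda K^T-K\Lambda(K^TD_x).
\]
Substitute $D_xK=KD_y+vw^T$ and $K^TD_x=D_yK^T+wv^T$, the second obtained by transposing the first. The terms $KD_y\Lambda K^T$ and $K\Lambda D_yK^T$ cancel, because diagonal matrices commute. What remains is
\[
\mathcal{D}_{x,x}(A)=vw^T\Lambda K^T-K\Lambda wv^T.
\]
Setting $z=K\Lambda w$, this reads $vz^T-zv^T$, since $\Lambda$ is diagonal and hence $(w^T\Lambda K^T)^T=K\Lambda w$. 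It remains to identify $z$ with $Av$. Since $Av=K\Lambda K^Tv$, it suffices to show $K^Tv=w$. This follows from the entrywise identities $(K^Tv)_j=w_j\sum_k v_k^2/(x_k-y_j)=w_j\sum_k b_k C_{kj}=w_j(C^Tb)_j$ and $C^Tb=\mathbf{1}$, which holds by the definition of $b$.

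The eigenvalue formula then comes from $K^Tz=\Lambda w$, which gives $\lambda_j=(K^Tz)_j/w_j$. Here $K^Tz=D_wC^TD_vz$, so $\lambda=C^TD_vz$. The $w_j$ are nonzero because the $a_k$ are nonzero by Theorem \ref{th-main-fasino}.

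For the \emph{if} direction, suppose $\mathcal{D}_{x,x}(A)=vz^T-zv^T$ with $z=Av$. The key point is that the kernel of $\mathcal{D}_{x,x}$ consists exactly of the diagonal matrices, since the $x_k$ are distinct. Hence $A$ is determined by its right-hand side up to a diagonal matrix. Define $\Lambda=\diag(C^TD_vz)$ and $B=K\Lambda K^T$. By the first part, $\mathcal{D}_{x,x}(B)=v\tilde z^T-\tilde zv^T$ with $\tilde z=K\Lambda w=KD_wC^TD_vz=KK^Tz=z$. So $A-B$ lies in the kernel, i.e.\ $A-B=D$ is diagonal. This is the main obstacle: we must show $D=0$, and for that we need to use $z=Av$.

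From $Bv=K\Lambda w=z=Av$ we get $Dv=0$. Since every $v_k\neq0$ (the $b_k$ are nonzero by Theorem \ref{th-main-fasino}), this forces $D=0$. Hence $A=B\in\mathbb{A}(K)$, and complex entries cause no difficulty anywhere in the argument.
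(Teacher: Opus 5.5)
Your proof is correct and complete. The paper gives no argument for this corollary. It only asserts that the proof of \cite[Corollary 13]{fasino-cauchy} carries over unchanged once complex diagonal $\Lambda$ are allowed, so your write-up is a self-contained proof rather than a reconstruction of something in the text.

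All the steps check out:
\begin{itemize}
\item $D_xK-KD_y=vw^T$ holds for every $\alpha$. The normalization $\alpha=1$ enters only through $K^Tv=\alpha w=w$, which you derive correctly from $C^Tb=\mathbf{1}$.
\item The terms $KD_y\Lambda K^T$ and $K\Lambda D_yK^T$ cancel.
\item With $\Lambda=\diag(C^TD_vz)$ you get $K\Lambda w=KD_wC^TD_vz=KK^Tz=z$.
\item The kernel of $\mathcal{D}_{x,x}$ consists of the diagonal matrices.
\item The hypothesis $z=Av$, together with $v_k\neq 0$, removes the remaining diagonal ambiguity.
\end{itemize}
You are also right that the complex case costs nothing: $K$ is real and only transposes of $\Lambda$, never conjugates, appear.

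A shorter route fits the paper's remark that orthogonal Cauchy-like matrices arise from secular equations. From $K^TD_xK=D_y+K^Tvw^T=D_y+ww^T$ and $Kw=KK^Tv=v$ one gets $KD_yK^T=D_x-vv^T$. Since the $y_j$ are distinct, $\mathbb{A}(K)$ is exactly the commutant of this diagonal-plus-rank-one matrix. The condition $A(D_x-vv^T)=(D_x-vv^T)A$ reads $\mathcal{D}_{x,x}(A)=vv^TA-Avv^T$, which equals $vz^T-zv^T$ with $z=Av$ as soon as $A=A^T$. In the ``if'' direction symmetry is automatic: the right-hand side is skew-symmetric and the $x_k$ are distinct, so $A_{kj}=A_{jk}$ for $k\neq j$.

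That argument handles both directions at once and shows that the algebra is generated by $D_x-vv^T$. Your construction has a different advantage. It produces the eigenvalue formula $\lambda=C^TD_vz$ and the explicit preimage $B=K\diag(C^TD_vz)K^T$ along the way, which is precisely the isomorphism $\mathbb{C}^n\to\mathbb{A}(K)$ the paper mentions right after the corollary.
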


The relation $\lambda=C^{T}D_vz$ establishes an isomorphism $\C^n\to\mathbb{A}(K)$,
which enables a generic matrix $A\in\mathbb{A}(K)$ to be recovered from the knowledge of the vectors $x,v$ and $z$. See \cite{fasino-cauchy} for a detailed discussion.


\section{The Korovkin Theorem}\label{sec:main}

In this section, we present a Korovkin-type theory for Toeplitz matrices 
using Cauchy-like orthogonal matrices.
More precisely, we investigate whether the commutative algebras generated by a sequence of Cauchy-like orthogonal matrices approximate a specific class of Toeplitz matrices. This is achieved by considering the approximation of Toeplitz matrices associated with suitable test functions. Our results provide a negative answer.


\subsection{The Korovkin Theorem in a Toeplitz setting}

Firstly, we present some of the theory developed in \cite{koro1} and restated in \cite{koro-tutorial}.
In the sequel, $\lambda(X)$ and $\sigma(X)$ denote the set of eigenvalues and singular values of the matrix $X\in\M_n(\C)$, respectively.

\begin{Def}
A matrix sequence $\lbrace A_n\rbrace$, $A_n\in\M_n(\C)$, is \emph{clustered at} $s\in\C$ in the eigenvalue sense (singular value sense with
$s\in\R^+_0$), if for any $\varepsilon >0$ holds
\begin{equation*}
  \#(\lambda(A_n)\cap\lbrace z\mid\arrowvert z-s\arrowvert <\varepsilon\rbrace)=o(n)\quad\bigg(\#(\sigma(A_n)\cap\lbrace z\mid\arrowvert z-s\arrowvert <\varepsilon\rbrace)=o(n)\bigg).
\end{equation*}
The cluster is \emph{strong} if the term $o(n)$ can be replaced by $O(1)$.
\end{Def}

Further, we remind that the clustering at $s\in\C$ for the matrix sequence $\lbrace A_n\rbrace$ is equivalent to the sequence being distributed as the constant function $s$ in the eigenvalue/singular value sense, as discussed in, e.g., \cite{GLT-BookI}. In the Hermitian setting, if $\lbrace A_n\rbrace$ is {clustered at} $s\in\R$ in the eigenvalue sense then $\lbrace A_n\rbrace$ is also {clustered at} $|s|$ in the singular value sense. The case $s=0$ is especially important in the preconditioning theory: the set of matrix sequences $\lbrace A_n\rbrace$ which are {clustered at} $0$ in the singular value sense form a two-sided ideal of the $*$-algebra of GLT matrix sequences \cite{GLT-BookI}, and this is very useful for simplifying the analysis of tough matrix sequences.

Given a sequence $\lbrace U_n\rbrace$ of unitary matrices we consider the sequence of LPOs
\begin{equation}\label{eq:lpo}
  \PP_{U_n}\colon\M_n(\C)\to\mathbb{A}(U_n),\quad  X\mapsto\arg\min_{A\in\mathbb{A}(U_n)}\|X-A\|_\fro .
\end{equation}
The explicit formula for this projection operator is
\begin{equation}\label{eq:explicit}
  \PP_{U_n}(X)=U_n\diag(U_n^{\ast}XU_n)U_n^{\ast},
\end{equation}
with $\diag(X)$ being the diagonal matrix having $X_{k,k}$ as $k$-th diagonal entry. Notice that the explicit expression in (\ref{eq:explicit})
yields a self-evident proof of the LPO nature of the operators in (\ref{eq:lpo}).

Pose
\begin{equation*}
  C_{2\pi}=\lbrace f\colon\R\to\C\text{ continuous and $2\pi$-periodic}\rbrace.
\end{equation*}
For $f\in C_{2\pi}$, we apply the projection to the Toeplitz matrix sequences $\lbrace T_n(f)\rbrace$ where the $n$-th matrix is
as in \eqref{Toepl-matrix} and \eqref{Fcdef}. For every $n$, the map $T_n\colon C_{2\pi}\to\M_n(\C)$ is LPO.

For $k=1,\ldots,n$, the $k$-th eigenvalue of $\PP_{U_n}(T_n(f))$ is given by
\begin{equation}\label{eigenvalue}
  \lambda_{k}(f,n)=(U_n^{\ast}T_n(f)U_n)_{kk} .
\end{equation}
We emphasize that, for a fixed value of $n$, this expression defines an LPO, $\lambda_k(\cdot,n)\colon C_{2\pi}\to\C$.
In the sequel, we will apply \cite[Theorem 5.9]{koro-tutorial}, which is restated here.

\begin{teo}\label{teo1}
Consider the Banach space $C_{2\pi}$ endowed
with the sup-norm, define $t_k^{(n)}=2\pi (k-1)/n$ for $k=1,\ldots,n$ and let us take $\lambda_k$ as in \eqref{eigenvalue}. If
\begin{equation}\label{cond}
  \max_{k=1,\ldots,n}|\lambda_k(\e^{it},n)-\e^{it_k^{(n)}}|=O(\theta_n)
\end{equation}
for some sequence  $\{\theta_n\}_n$ tending to zero as $n\to\infty$, then for every $f\in C_{2\pi}$ the sequence $\lbrace T_n(f)-\PP_{U_n}(T_n(f))\rbrace$ is clustered at zero. The cluster is strong if we can set $\theta_n=n^{-1}$.
\end{teo}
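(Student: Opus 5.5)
This is a restatement of \cite[Theorem 5.9]{koro-tutorial}, and I would prove it with the matrix Korovkin approach of \cite{koro1}. The idea is to reduce the clustering statement for a general $f\in C_{2\pi}$ to trigonometric polynomials, and then to the single test function $\e^{it}$.

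\textbf{Step 1 (the test function and its powers).} Write $\PP_{U_n}(T_n(p)) = U_n\Lambda_n(p)U_n^*$, where $\Lambda_n(p)=\diag(\lambda_k(p,n))$. I first show that $\{T_n(\e^{ijt})-\PP_{U_n}(T_n(\e^{ijt}))\}$ is clustered at zero for every fixed $j$.
\begin{itemize}
\item For $j=1$, $T_n(\e^{it})$ is the lower shift $Z_n$. Hypothesis \eqref{cond}, combined with the positivity of the LPO $\lambda_k(\cdot,n)$ and with $\lambda_k(1,n)=1$, should give $\lambda_k(|\e^{it}-\e^{it_k^{(n)}}|^2,n)=O(\theta_n)$ uniformly in $k$. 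This is the classical Korovkin step, expanding $|\e^{it}-\e^{is}|^2=2-\e^{it}\e^{-is}-\e^{-it}\e^{is}$.
\item Next, use $\lambda_k(|g|^2,n)\ge |\lambda_k(g,n)|^2$ (Cauchy--Schwarz for positive functionals) together with the Frobenius identity
\[
\|T_n(g)-\PP_{U_n}(T_n(g))\|_\fro^2=\|T_n(g)\|_\fro^2-\sum_k|\lambda_k(g,n)|^2 .
\]
Here $\|Z_n\|_\fro^2=n-1$ and $\sum_k|\lambda_k|^2=n-O(n\theta_n)$, so the squared Frobenius norm is $O(n\theta_n)+O(1)$. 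A Frobenius bound of size $o(n)$ implies clustering at zero in the singular value sense, and the cluster is strong when $\theta_n=n^{-1}$.
\item For $j\ge 2$, $T_n(\e^{ijt})=Z_n^j$. The product $\Lambda_n(\e^{it})^j$ approximates $U_n^*Z_n^jU_n$ up to terms of small Frobenius norm. Telescoping $Z_n^j-(U_n\Lambda_nU_n^*)^j$ and using $\|Z_n\|,\|\Lambda_n\|\le1$ in spectral norm bounds this by $j$ times the $j=1$ error. Since the projection is Frobenius-optimal, the error $\|Z_n^j-\PP_{U_n}(Z_n^j)\|_\fro$ is at most $\|Z_n^j-(U_n\Lambda_nU_n^*)^j\|_\fro$. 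Negative $j$ follow by conjugate transposition, since the algebra is closed under adjoints.
\end{itemize}

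\textbf{Step 2 (trigonometric polynomials).} By linearity, $T_n(p)-\PP_{U_n}(T_n(p))$ has Frobenius norm $O(\sqrt{n\theta_n}+1)$ for every trigonometric polynomial $p$, with constants depending on $p$.

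\textbf{Step 3 (density).} Given $f\in C_{2\pi}$ and $\varepsilon>0$, pick $p$ with $\|f-p\|_\infty<\varepsilon$ (Weierstrass). Then $\|T_n(f-p)\|\le\varepsilon$ and $\|\PP_{U_n}(T_n(f-p))\|\le\varepsilon$ in spectral norm. The second bound holds because $\PP_{U_n}$ is a contraction in spectral norm, being a pinching in the $U_n$ basis. So
\[
T_n(f)-\PP_{U_n}(T_n(f)) = \bigl(\text{small Frobenius part}\bigr) + \bigl(\text{spectral norm} \le 2\varepsilon\bigr).
\]
By a standard splitting lemma (Weyl/Cauchy interlacing for singular values), this gives clustering at zero; strong clustering is inherited in the $\theta_n=n^{-1}$ case.

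\textbf{Main obstacle.} I expect the hardest part to be the first step of the $j=1$ case: turning the eigenvalue hypothesis \eqref{cond} into smallness of $\lambda_k(|\e^{it}-\e^{it_k^{(n)}}|^2,n)$. This step relies on the positivity of $\lambda_k(\cdot,n)$ and on the LPO nature of $T_n(\cdot)$ recalled earlier. The other delicate point is keeping the constants uniform in $k$.
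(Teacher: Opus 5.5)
Your argument is correct. The paper gives no proof of this statement; it quotes it from \cite[Theorem 5.9]{koro-tutorial}, where it is obtained from the Korovkin machinery, i.e.\ from the positivity of $\lambda_k(\cdot,n)$, so the useful comparison is with that route rather than with a proof in the paper.

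In the Korovkin route one first notes that $\lambda_k(|\e^{it}-\e^{it_k^{(n)}}|^2,n)=2-2\,\mathrm{Re}\bigl(\e^{-it_k^{(n)}}\lambda_k(\e^{it},n)\bigr)=O(\theta_n)$. This is a linear identity using only $\lambda_k(1,n)=1$ and $\lambda_k(\e^{-it},n)=\overline{\lambda_k(\e^{it},n)}$, so it is not the obstacle you expect. The classical Korovkin inequality then gives $\max_k|\lambda_k(f,n)-f(t_k^{(n)})|\to0$ for every $f\in C_{2\pi}$. Finally, the Frobenius identity, together with $\frac1n\|T_n(f)\|_\fro^2\to\int|f|^2\d m$ and Riemann sums on the grid, gives the $o(n)$ bound. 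This is the place where the grid $t_k^{(n)}$ actually matters.

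Your proof never needs that step. For $Z_n$ you only need $|\lambda_k(\e^{it},n)|\ge1-O(\theta_n)$, which follows from \eqref{cond} by the triangle inequality; state that direct bound. If you actually routed the estimate through $\lambda_k(|\e^{it}-\e^{it_k^{(n)}}|^2,n)$ and Cauchy--Schwarz, you would only recover $|\lambda_k(\e^{it},n)-\e^{it_k^{(n)}}|=O(\sqrt{\theta_n})$. That is weaker than your hypothesis and would lose the strong cluster. In place of positivity you use:
\begin{itemize}
\item the multiplicative structure of $\mathbb{A}(U_n)$, via telescoping plus Frobenius optimality;
\item its closure under adjoints;
\item the spectral-norm contractivity of the pinching;
\item a splitting lemma.
\end{itemize}

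What your route buys:
\begin{itemize}
\item It is elementary.
\item The strong cluster is immediate, since each trigonometric polynomial has $O(1)$ squared Frobenius error when $\theta_n=n^{-1}$.
\item Only $|\e^{it_k^{(n)}}|=1$ is used, so the weaker hypothesis $\frac1n\sum_k|\lambda_k(\e^{it},n)|^2\to1$ already suffices.
\end{itemize}

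What the Korovkin route buys:
\begin{itemize}
\item It works for arbitrary sequences of LPOs, not only for Frobenius projections onto algebras.
\item It yields the stronger uniform localization of the eigenvalues of $\PP_{U_n}(T_n(f))$ near $f(t_k^{(n)})$.
\item It is the framework behind the real test-function variants in Theorems~\ref{teo2} and~\ref{teo3}.
\end{itemize}

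Finally, note that you prove clustering in the singular value sense. If the eigenvalue sense is wanted for complex $f$, your splitting also gives $\|T_n(f)-\PP_{U_n}(T_n(f))\|_\fro^2=o(n)$. Schur's inequality $\sum_j|\lambda_j(X)|^2\le\|X\|_\fro^2$ then concludes.
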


If we restrict our attention to real valued functions, the theorem becomes
as follows.

\begin{teo}\label{teo2}
In the hypothesis of Theorem~\ref{teo1}, if
\begin{equation}\label{cond2}
  \max_{k=1,\ldots,n}|\lambda_k(g(t),n)-g(t_k^{(n)})|=O(\theta_n)
\end{equation}
for $g(t)\in\lbrace\cos{t},\sin{t}\rbrace$
and some sequence  $\{\theta_n\}_n$ tending to zero as $n\to\infty$,
then for every real valued $f\in C_{2\pi}$ the sequence $\lbrace T_n(f)-\PP_{U_n}(T_n(f))\rbrace$ is clustered at zero.
The cluster is strong if we can set $\theta_n=n^{-1}$.
\end{teo}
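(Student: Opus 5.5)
We derive Theorem~\ref{teo2} from Theorem~\ref{teo1}, using only linearity of the maps involved. Since $\e^{it}=\cos t+i\sin t$ and, for each fixed $n$ and $k$, the functional $\lambda_k(\cdot,n)$ in \eqref{eigenvalue} is linear (it is the composition of the linear map $T_n(\cdot)$ with the linear map $X\mapsto (U_n^{\ast}XU_n)_{kk}$), we have
\[
\lambda_k(\e^{it},n)-\e^{it_k^{(n)}} = \bigl(\lambda_k(\cos t,n)-\cos t_k^{(n)}\bigr) + i\,\bigl(\lambda_k(\sin t,n)-\sin t_k^{(n)}\bigr).
\]
By the triangle inequality, the maximum over $k$ of the left-hand side is at most the sum of the two maxima in \eqref{cond2}, each $O(\theta_n)$. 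Hence \eqref{cond} holds with the same sequence $\theta_n$, and Theorem~\ref{teo1} gives the clustering at zero of $\{T_n(f)-\PP_{U_n}(T_n(f))\}$ for every $f\in C_{2\pi}$, in particular for every real valued one. With $\theta_n=n^{-1}$, Theorem~\ref{teo1} also yields the strong cluster.

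The estimate for $\cos t$ and $\sin t$ is exactly what is required. This reduction is cleaner than running the Korovkin argument from scratch with the real test set $\{1,\cos t,\sin t\}$. It also needs no separate check for the constant function $1$: $T_n(1)=I_n$ and $\PP_{U_n}(I_n)=I_n$, so $\lambda_k(1,n)=1$ exactly. That fact is, in any case, already built into Theorem~\ref{teo1}.

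No step here is a real obstacle. The only point to verify is that $\lambda_k(\cdot,n)$ is complex-linear on $C_{2\pi}$, so that the decomposition above is legitimate. This follows from the linearity of the Fourier coefficients \eqref{Fcdef} in $f$, hence of $T_n(\cdot)$, together with the explicit formula \eqref{eq:explicit}.

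An alternative route would redo the proof of \cite[Theorem 5.9]{koro-tutorial} for real $f$ via Hermitian LPO arguments. This would require reproducing the quadratic-form Korovkin inequality $|f(t)-f(s)|\le \varepsilon+ c_\varepsilon(1-\cos(t-s))$ and expanding $\cos(t-s)$ in terms of $\cos t,\sin t$. We avoid this route, since the linear reduction to Theorem~\ref{teo1} suffices.
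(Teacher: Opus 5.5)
Your reduction is correct and is essentially what the paper does: the paper gives no separate proof, and presents Theorem~\ref{teo2} simply as the real-valued specialization of Theorem~\ref{teo1}. Your linearity-plus-triangle-inequality step makes explicit that the $\cos t,\sin t$ conditions give \eqref{cond} with the same $\theta_n$ (the two conditions are in fact equivalent, since $\lambda_k(\cos t,n)=\mathrm{Re}\,\lambda_k(\e^{it},n)$ and $\lambda_k(\sin t,n)=\mathrm{Im}\,\lambda_k(\e^{it},n)$), so both the clustering and the strong-cluster clause are inherited directly from Theorem~\ref{teo1}.
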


If moreover we restrict to even functions we obtain

\begin{teo}\label{teo3}
In the hypothesis of Theorem~\ref{teo1}, if
\begin{equation}\label{cond3}
  \max_{k=1,\ldots,n}|\lambda_k(g(t),n)-g(t_k^{(n)})|=O(\theta_n)
\end{equation}
for $g(t)\in\lbrace\cos{t},\cos{2t}\rbrace$
and some sequence  $\{\theta_n\}_n$ tending to zero as $n\to\infty$,
then for every even real valued $f\in C_{2\pi}$ the sequence $\lbrace T_n(f)-\PP_{U_n}(T_n(f))\rbrace$ is clustered at zero.
The cluster is strong if we can set $\theta_n=n^{-1}$.
\end{teo}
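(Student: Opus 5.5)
This is the even-function specialization of Theorem~\ref{teo1}, and I would derive it from the real-valued version, Theorem~\ref{teo2}, via the classical Korovkin argument for even functions. The key observation is that every even real-valued $f\in C_{2\pi}$ can be written as $f(t)=h(\cos t)$ with $h$ continuous on $[-1,1]$. Under this change of variable, the test functions of the algebraic Korovkin theorem on $[-1,1]$, namely $1,u,u^2$, correspond to $1$, $\cos t$, and $\cos^2 t=(1+\cos 2t)/2$. The test set $\{\cos t,\cos 2t\}$, together with the constant function (which is reproduced exactly, since $T_n(1)=I_n$ and $\PP_{U_n}(I_n)=I_n$, hence $\lambda_k(1,n)=1$), therefore spans the three Korovkin test functions for even functions.

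The steps, in order, are as follows.

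First, for a fixed $n$, the map $f\mapsto\lambda_k(f,n)$ is an LPO on $C_{2\pi}$. The reason is that $T_n(\cdot)$ is an LPO and $\lambda_k(f,n)=u_k^{\ast}T_n(f)u_k$, where $u_k$ is the $k$-th column of $U_n$.

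Second, I repeat the proof of the Korovkin-type estimate behind Theorem~\ref{teo1}, following \cite{koro1,koro-tutorial}, with a different majorant. Given $\varepsilon>0$, uniform continuity of $h$ gives, for all $s,t$,
\[
|f(t)-f(s)|\le \varepsilon+\frac{2\|f\|_\infty}{\delta^2}\,(\cos t-\cos s)^2 .
\]
Expanding the square,
\[
(\cos t-\cos s)^2=\tfrac12(1+\cos 2t)-2\cos s\cos t+\cos^2 s .
\]
Applying positivity of $\lambda_k(\cdot,n)$ in the variable $t$ with $s=t_k^{(n)}$, and inserting hypothesis \eqref{cond3}, I obtain
\[
\lambda_k\bigl((\cos t-\cos s)^2,n\bigr)=\tfrac12\bigl(1+\cos 2s\bigr)-2\cos^2 s+\cos^2 s+O(\theta_n)=O(\theta_n),
\]
uniformly in $k$. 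It follows that
\[
\max_k|\lambda_k(f,n)-f(t_k^{(n)})|\le\varepsilon+C_\varepsilon\,\theta_n .
\]

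Third, I convert this eigenvalue estimate into clustering. Write
\[
T_n(f)-\PP_{U_n}(T_n(f))=\bigl(T_n(f)-U_nD_n(f)U_n^\ast\bigr)+U_n\bigl(D_n(f)-\Lambda_n(f)\bigr)U_n^\ast ,
\]
where $D_n(f)=\diag\bigl(f(t_k^{(n)})\bigr)$ and $\Lambda_n(f)$ is the diagonal matrix of the eigenvalues $\lambda_k(f,n)$. The second term has small spectral norm by the previous step. For the first term I would use the Frobenius-norm argument of \cite{koro1}. By the Pythagorean property of the orthogonal projection,
\[
\|T_n(f)-\PP_{U_n}(T_n(f))\|_\fro^2=\|T_n(f)\|_\fro^2-\sum_k|\lambda_k(f,n)|^2 .
\]
On the other hand, $\|T_n(f)\|_\fro^2=n\,\frac{1}{2\pi}\int|f|^2+o(n)$ by Szeg\H{o}-type estimates, and $\sum_k|\lambda_k(f,n)|^2=\sum_k|f(t_k^{(n)})|^2+o(n)$ by the previous step. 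The latter is a Riemann sum for the same integral. Hence the Frobenius norm squared is $o(n)$, which gives clustering at zero by the standard counting bound. If $\theta_n=n^{-1}$, the same argument made quantitative, first for trigonometric polynomials and then by density, yields the strong cluster.

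The main obstacle is the last step for merely continuous $f$ in the strong-cluster case. There I would first treat even trigonometric polynomials $p$, for which the Frobenius error is bounded linearly in terms of the test-function errors. I would then approximate $f$ uniformly by such $p$, splitting the error into a small-norm part and a part of bounded rank, exactly as in \cite{Koro-algebra1,koro1}.
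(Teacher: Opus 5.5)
The paper does not prove this statement; it is quoted from \cite[Theorem 5.9]{koro-tutorial} (going back to \cite{koro1}), so your proposal has to stand on its own. The weak-cluster part does. The representation $f=h(\cos t)$, the majorant $\varepsilon+2\|f\|_\infty\delta^{-2}(\cos t-\cos s)^2$, the identity $\lambda_k(\cos^2 t,n)=\tfrac12\bigl(1+\lambda_k(\cos 2t,n)\bigr)$ and positivity give $\max_k|\lambda_k(f,n)-f(t_k^{(n)})|\to 0$. The Pythagorean identity (the same as \eqref{eq:direct-projection-energy}) together with the Riemann sum then gives $\|T_n(f)-\PP_{U_n}(T_n(f))\|_\fro^2=o(n)$. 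This is the standard Korovkin--Frobenius argument of that literature. The splitting through $U_nD_n(f)U_n^\ast$ is never used and can be dropped.

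The strong-cluster part has a gap. Your claim that, for an even trigonometric polynomial $p$, the Frobenius error is ``bounded linearly in terms of the test-function errors'' is precisely what must be proved, and redoing Step~2 quantitatively does not give it. What you need is $\max_k|\lambda_k(p,n)-p(t_k^{(n)})|=O(\theta_n)$. Then, for $n>2\deg p$, exact quadrature gives $\sum_k p(t_k^{(n)})^2=n\int p^2\d m\ge\|T_n(p)\|_\fro^2$, and with $|\lambda_k(p,n)|\le\|p\|_\infty$ you get $\|T_n(p)-\PP_{U_n}(T_n(p))\|_\fro^2\le 2\|p\|_\infty\, n\, O(\theta_n)=O(1)$. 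But Step~2 only yields $\varepsilon+C_\varepsilon\theta_n$. Its modulus-of-continuity version yields $O(\omega_h(\sqrt{\theta_n}))=O(n^{-1/2})$, i.e.\ only an $O(\sqrt n)$ bound on the number of outliers.

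The missing idea is a second-order argument. Write $p(t)=h(\cos t)$ with $h$ an algebraic polynomial, and expand $h(u)=h(\cos s)+h'(\cos s)(u-\cos s)+r_s(u)$ with $|r_s(u)|\le\tfrac12\|h''\|_\infty(u-\cos s)^2$. Apply $\lambda_k(\cdot,n)$ with $s=t_k^{(n)}$. The linear term is $h'(\cos s)$ times the $\cos t$ test error, and by positivity the remainder is bounded by $\tfrac12\|h''\|_\infty\lambda_k\bigl((\cos t-\cos s)^2,n\bigr)=O(\theta_n)$.

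With this in place your density step works, provided you extract the bounded-rank piece from the $O(1)$ Frobenius bound. At most $C_p/\varepsilon^2$ eigenvalues of $T_n(p)-\PP_{U_n}(T_n(p))$ exceed $\varepsilon$ in modulus; the matrix itself is generally of full rank. You then combine this, via Weyl's inequalities, with $T_n(f-p)-\PP_{U_n}(T_n(f-p))$, whose spectral norm is at most $2\|f-p\|_\infty$.
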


Corollary~\ref{lambda} already suggests that the Korovkin test will fail, since the displacement rank of the Toeplitz sequence is generally greater than one.
In particular, it is easy to prove that the first two theorems cannot be applied to our setting.

\begin{coro}   \label{corosin}
No sequence of matrix algebras $\mathbb{A}(K_n)$ with $K_n$ orthogonal Cauchy-like matrix can ever satisfy the hypotheses \eqref{cond} of Theorem \ref{teo1}
and \eqref{cond2} of Theorem \ref{teo2}.
\end{coro}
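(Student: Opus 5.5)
The key observation is that $K_n$ is a \emph{real} orthogonal matrix, while the test functions $\e^{it}$ and $\sin t$ generate Toeplitz matrices that are not real symmetric. We will show that the eigenvalues $\lambda_k(g,n)$ in \eqref{eigenvalue} are forced to vanish or be real, so they cannot approximate $g(t_k^{(n)})$ uniformly.

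\textbf{Case $g(t)=\sin t$.} The Fourier coefficients are $a_{\pm1}=\mp i/2$ and all others vanish. Hence $T_n(\sin t)=\frac{i}{2}(J^T-J)$, where $J$ is the lower shift matrix. This is $i$ times a real skew-symmetric matrix $S$. Since $K_n$ is real, $K_n^T S K_n$ is again real skew-symmetric, so its diagonal is zero. Therefore
\[
\lambda_k(\sin t,n)=\tfrac{i}{2}\,(K_n^T(J^T-J)K_n)_{kk}=0
\]
for every $k$ and every $n$. The quantity in \eqref{cond2} then equals $\max_k|\sin(2\pi(k-1)/n)|$. For $n\ge 4$ this is at least $\sin(\pi/2)=1$ up to the choice of the nearest grid point; in any case it tends to $1$ and not to $0$. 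So \eqref{cond2} fails.

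\textbf{Case $g(t)=\e^{it}$.} Here $T_n(\e^{it})=J$, which is real. Thus $\lambda_k(\e^{it},n)=(K_n^TJK_n)_{kk}$ is a real number. The distance from a real number $r$ to $\e^{is}$ is at least $|\sin s|$. Taking $k$ with $t_k^{(n)}$ closest to $\pi/2$, the quantity in \eqref{cond} is bounded below by a constant tending to $1$. Hence it is not $O(\theta_n)$ for any $\theta_n\to0$, and \eqref{cond} fails as well.

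The only points needing care are the grid argument, which requires $n$ large enough that some $t_k^{(n)}$ lies near $\pi/2$, and the observation that the argument uses only the realness of $K_n$. No specific Cauchy-like structure is needed. In fact the same reasoning shows that no sequence of real orthogonal matrices can satisfy \eqref{cond} or \eqref{cond2}, and the corollary follows as a special case.
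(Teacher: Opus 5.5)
Your proof is correct and follows the paper's argument: the realness of $K_n$ makes each $\lambda_k(\e^{it},n)=(K_n^TJK_n)_{kk}$ real, and the skew-symmetry of $K_n^T(J^T-J)K_n$ forces $\lambda_k(\sin t,n)=0$. You also supply the quantitative step the paper leaves implicit: a grid point $t_k^{(n)}$ within $\pi/n$ of $\pi/2$ gives the lower bound $\cos(\pi/n)\to1$, which is the precise version of your slightly loose claim ``at least $\sin(\pi/2)=1$ for $n\ge4$''.
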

\begin{proof}
Firstly, consider Theorem \ref{teo1}.
For $f\in C_{2\pi}$, $T_n(f)$ in general fails to be a real symmetric matrix while its approximant $\PP_{K_n}(T_n(f))$ is real symmetric by construction. Indeed, the $(k,j)$-entry of the Toeplitz matrix $T_n(\e^{it})$ equals $1$ if $k-j=1$ and zero otherwise. Hence, the matrix $K_n^{\ast}T_n(f)K_n=K_n^TT_n(f)K_n$ is real and so are the eigenvalues of $\PP_{K_n}(T_n(\e^{it}))$ by formula (\ref{eq:explicit}). Therefore, \eqref{cond} cannot be satisfied.

Now, consider Theorem~\ref{teo2}.
The $(k,j)$-entry of the Toeplitz matrix $T_n(\sin{t})$ equals $1/2i$ if $k-j=-1$,
$-1/2i$ if $k-j=1$ and zero otherwise. Thus $T_n(\sin{t})$ is a real skew-symmetric matrix multiplied by $i$. This property is inherited by $K_n^{\ast}T_n(\sin{t})K_n$, so its diagonal is zero.
Finally, we have $\PP_{K_n}(T_n(\sin{t})) = O$.
\end{proof}

We leave the check for Theorem~\ref{teo3} to the next subsection where a more general result is presented.

\begin{rmk}
In particular, condition \eqref{cond3} implies that for any $a<b\in\R$ it holds the limit
\begin{equation}\label{lebcond}
    \frac{\#\left\lbrace \lambda_k(\cos t,n)\in[a,b]\right\rbrace}{n}\longrightarrow\frac{m\left(\left\lbrace t\in[0,2\pi]\mid\cos(t)\in[a,b]\right\rbrace\right)}{2\pi}\qquad\text{as $n\rightarrow\infty$,}
\end{equation}
where $m$ stands for the Lebesgue measure.
\end{rmk}

\subsection{A direct approximation obstruction for Toeplitz sequences generated by real functions}


Our next goal is to extend the negative result in Corollary \ref{corosin}
to real-valued symbols.
We start by proving a technical lemma that provides a lower bound on the $\infty$-norm of the columns of an orthogonal Cauchy-like matrix.
Incidentally, this lemma offers an alternative proof of certain well-known estimates on Catalan numbers. We include these estimates to make the lemma self-contained.

For a vector $u\in\C^{n}$, we denote $\|u\|_\infty=\max_{j=1,\ldots,n}|u_j|$.
Recall the Catalan numbers
\begin{equation*}
   \Cat_0=1,\qquad\Cat_n=\frac1{n+1}\binom{2n}{n}=\sum_{j=1}^n\Cat_{j-1}\Cat_{n-j}\qquad\text{for $n\geq 1$.}
\end{equation*}

\begin{lemma}
\label{prop:geometric-localization}
Let $K=D_v\cau(x,y)D_w=\begin{bmatrix} u_1 & \cdots & u_n\end{bmatrix}\in\M_n(\R)$ be an orthogonal Cauchy-like matrix. Then
\begin{equation}\label{eq:geometric-localization}
 \left(\prod_{k=1}^n\|u_k\|_\infty\right)^{1/n}\geq\Cat_n^{-1/n}\geq\frac{1}{4}
 \qquad\text{and}\qquad\Cat_n^{-1/n}\longrightarrow\frac{1}{4}.
\end{equation}
\end{lemma}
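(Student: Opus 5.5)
The plan is to combine the determinant identity with the explicit entries of $K$. Since $K$ is orthogonal, $|\det K|=1$. Write $K=D_vCD_w$ with $C=\cau(x,y)$; then $\prod_k|v_kw_k|=1/|\det C|$, and Schechter's formula gives $|\det C|$ in closed form. By Remark~\ref{ordering} and Theorem~\ref{th-main-fasino} we may assume the nodes interlace, say $x_1<y_1<x_2<\dots<x_n<y_n$. The first step is a lower bound $\|u_j\|_\infty\ge |K_{\sigma(j),j}|$ for a suitable permutation $\sigma$, so that
\[
\prod_j\|u_j\|_\infty\ \ge\ \prod_j |K_{\sigma(j)j}| = \frac{1}{|\det C|\prod_j|x_{\sigma(j)}-y_j|}.
\]
Substituting Schechter's formula turns the right-hand side into a purely geometric ratio of products of node differences. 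For $\sigma=\mathrm{id}$ it reads
\[
\frac{\prod_{k\neq j}|x_k-y_j|}{\prod_{k<j}|x_k-x_j|\,|y_k-y_j|}.
\]

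The identity permutation alone will not suffice. For $j<k$, the pair factor $\frac{(x_k-y_j)(y_k-x_j)}{(x_k-x_j)(y_k-y_j)}$ can be arbitrarily small when $y_j$ approaches $x_k$. So $\sigma$ must be chosen adaptively, e.g. by pairing $y_j$ with its nearer interlacing neighbour $x_j$ or $x_{j+1}$. This is where I expect the Catalan numbers to enter. I would argue by induction on $n$, locating the smallest gap among consecutive nodes, say between $y_m$ and one of its neighbours. That pair is matched, and the two remaining groups of nodes, of sizes $m-1$ and $n-m$, are still interlacing configurations. Applying the induction hypothesis to each group, the cross factors between the two groups are controlled by the extremal choice of gap. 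Summing over the possible splitting positions $m$ should reproduce the recursion $\Cat_n=\sum_{m=1}^n\Cat_{m-1}\Cat_{n-m}$. An alternative I would try in parallel is to view $1/\prod_j\|u_j\|_\infty^{2}$ as bounded by a sum over matchings. Using $K_{kj}^2=L_k^{(x)}(y_j)L_j^{(y)}(x_k)$ and the Lagrange identity in the Corollary, the admissible (non-crossing) matchings of the interlaced nodes are counted by $\Cat_n$. A pigeonhole argument then gives the bound.

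The main obstacle is exactly this combinatorial-geometric step: showing that the cross terms between the two halves are at least $1$ (or at least absorbed), so that the induction closes with the constant $\Cat_n^{-1}$ and not something worse. I would first test the cases $n=1,2$ explicitly to fix which neighbour should be chosen and to confirm that $\Cat_2=2$ is attained in the limit.

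The remaining claims are routine. From $\Cat_n=\frac1{n+1}\binom{2n}{n}\le\binom{2n}{n}\le 4^n$ we get $\Cat_n^{-1/n}\ge 1/4$. Stirling's formula gives $\binom{2n}{n}\sim 4^n/\sqrt{\pi n}$, so $\Cat_n^{1/n}\to4$ because $(n^{3/2}\sqrt\pi)^{1/n}\to1$.
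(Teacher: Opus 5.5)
There is a genuine gap. Your opening reduction is correct and is the same as the paper's. Every permutation product of $K$ equals $P=\prod_i|v_i|\prod_j|w_j|$ times the corresponding product of $C=\cau(x,y)$, and $1=|\det K|=P|\det C|$. So it suffices to prove $|\det C|\le \Cat_n M(C)$, where $M(C)=\max_{\sigma}\prod_j|C_{\sigma(j),j}|$. Your Catalan estimates at the end are also fine.

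That inequality, however, is the entire content of the lemma, and the mechanism you sketch for it does not work. Write $C^{(k,j)}$ for $C$ with row $k$ and column $j$ deleted. Schechter's formula together with $K_{kj}^2=L_k^{(x)}(y_j)L_j^{(y)}(x_k)$ gives, for every pair $(k,j)$, the exact identity $|\det C|=|C_{kj}|\,|\det C^{(k,j)}|/K_{kj}^2$. Since $K_{kj}^2\le 1$, peeling off a single matched pair (e.g.\ the smallest gap $x_m<y_m$) relates $|\det C|$ to the reduced problem by the factor $1/K_{kj}^2\ge 1$, which is unfavourable.

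Splitting does not help either. Removing an adjacent pair leaves two groups with the \emph{same} orientation, and their mixed factors $\frac{(x_q-x_p)(y_q-y_p)}{(y_q-x_p)(x_q-y_p)}$, $p<m<q$, are larger than $1$. This follows from $(y_q-x_p)(x_q-y_p)-(x_q-x_p)(y_q-y_p)=(x_p-y_p)(y_q-x_q)<0$. So your cross terms go the wrong way. The induction would need an extra lower bound such as $K_{mm}^2\ge\Cat_{n-1}/\Cat_n$, which you do not supply. And once $m$ is fixed by an extremal choice, there is no sum over $m$ from which the convolution $\sum_m\Cat_{m-1}\Cat_{n-m}$ could arise.

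The missing idea is the cofactor expansion along the first row, $|\det C|\le\sum_{j=1}^n|C_{1j}|\,|\det C^{(1,j)}|$; this is where the sum over splitting positions comes from. Deleting $x_1$ and $y_j$, which are in general \emph{not} adjacent, leaves two blocks:
the block $L_j$, with nodes $y_1<x_2<\dots<y_{j-1}<x_j$, which has reversed orientation, so the induction must cover both orientations;
and the block $R_j$, with the nodes after $y_j$.
Now the mixed factors are $\frac{(x_q-x_p)(y_q-y_{p-1})}{(y_q-x_p)(x_q-y_{p-1})}<1$, because $(y_q-x_p)(x_q-y_{p-1})-(x_q-x_p)(y_q-y_{p-1})=(x_p-y_{p-1})(y_q-x_q)>0$. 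Hence $|\det C^{(1,j)}|\le|\det L_j|\,|\det R_j|\le\Cat_{j-1}\Cat_{n-j}M(L_j)M(R_j)$. Also $|C_{1j}|M(L_j)M(R_j)\le M(C)$, since these selections combine into one permutation product of $C$. Summing over $j$ gives $\Cat_n M(C)$.

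Unrolling this recursion realizes your second picture exactly. $|\det C|$ is bounded by the sum of $\prod|C_{kj}|$ over the $\Cat_n$ non-crossing matchings of the interlaced nodes, and pigeonhole finishes. But what eliminates the crossing matchings is this expansion together with the mixed-factor inequality, not the column identity $\sum_k L_k^{(x)}(y_j)L_j^{(y)}(x_k)=1$. So that alternative, as you state it, is missing the same key step.
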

\begin{proof}
Let $C=\cau(x,y)$. For a general matrix $B$ of dimension $r$, the device we use to prove our claim is to consider the largest product in the determinant expansion. Precisely, we define
\begin{equation}\label{eq:prod}
   M(B):= \max_{\sigma \in S_r} \prod_{j=1}^r |B_{\sigma(j),j}|.
\end{equation}
We first prove that
\begin{equation} \label{eq:cauchy-permutation-bound}
 |\det C|\leq M(C)\Cat_n.
\end{equation}
We prove \eqref{eq:cauchy-permutation-bound} by induction,
simultaneously for both interlacing orientations. The case $n=0,1$ are clear. Without loss of generality, we can assume that
\begin{equation*}
x_1 < y_1 < x_2 < \cdots < x_n < y_n.
\end{equation*}
For $1\leq j\leq n$, let $C^{(j)}$ be the minor obtained by deleting row $1$ and column $j$.

Since this deletion destroys interlacing, we further split this minor in diagonal blocks, by considering the block $L_j$ containing rows $2,\ldots,j$ and columns $1,\ldots,j-1$ of $C$, and let $R_j$ contain rows and columns $j+1,\ldots,n$. The left block preserve the interlacing, but in the opposite direction. For this reason, in the induction step we will cover both orientations.

We use now Cauchy determinant formula, obtaining.
\begin{equation*}
 |\det C^{(j)}|
 =
 |\det L_j|\,|\det R_j|
 \prod_{p=2}^{j}\prod_{q=j+1}^{n}
 \frac{(x_q-x_p)(y_q-y_{p-1})}
      {(y_q-x_p)(x_q-y_{p-1})}.
\end{equation*}
Here, every mixed factor has magnitude smaller than 1. Indeed, since $y_{p-1}<x_p<x_q<y_q$, we obtain
\begin{equation*}
 (y_q-x_p)(x_q-y_{p-1})-(x_q-x_p)(y_q-y_{p-1})=(x_p-y_{p-1})(y_q-x_q)>0.
\end{equation*}
Consequently,
\begin{equation}
 |\det C^{(j)}|
 \leq|\det L_j|\,|\det R_j|.
 \label{eq:minor-block-factorization}
\end{equation}
Now, the induction hypothesis and
\eqref{eq:minor-block-factorization} give us
\begin{equation*}
 |\det C^{(j)}|\leq M(L_j)M(R_j)\Cat_{j-1}\Cat_{n-j}.
\end{equation*}
Now, choose permutation products attaining $M(L_j)$ and $M(R_j)$.
Together, these select one entry from every row except row $1$
and every column except column $j$, without repetitions.
Adjoining $C_{1j}$ therefore gives a permutation product of $C$,
not necessarily maximizing, and hence
\begin{equation*}
 |C_{1j}|M(L_j)M(R_j)\leq M(C).
\end{equation*}
Expansion along the first row now yields
\begin{align*}
 |\det C|
 &\leq\sum_{j=1}^{n}|C_{1j}||\det C^{(j)}|\\
 &\leq\sum_{j=1}^{n}\Cat_{j-1}\Cat_{n-j}|C_{1j}|M(L_j)M(R_j)\\
 &\leq M(C)\sum_{j=1}^{n}\Cat_{j-1}\Cat_{n-j}
 =M(C)\Cat_n.
\end{align*}
This proves \eqref{eq:cauchy-permutation-bound}.
The Catalan recurrence arises from splitting $\det C^{(j)}$ 
into determinants of sizes $j-1$ and $n-j$, then summing over the deleted column $j$.

We now return to $K=D_vCD_w$. Indeed, the reason for using permutation
products is that every such product acquires the same scaling
factor
\begin{equation*}
 P:=\left(\prod_{i=1}^{n}|v_i|\right)
    \left(\prod_{j=1}^{n}|w_j|\right).
\end{equation*}
For every $\sigma\in S_n$,
\begin{equation*}
 \prod_{j=1}^{n}|K_{\sigma(j),j}|
 =
 \left(\prod_{j=1}^{n}|v_{\sigma(j)}|\right)
 \left(\prod_{j=1}^{n}|w_j|\right)
 \prod_{j=1}^{n}|C_{\sigma(j),j}|=P\prod_{j=1}^{n}|C_{\sigma(j),j}|.
\end{equation*}
As a consequence
\begin{equation*}
 M(K)=PM(C)\qquad\text{and}\qquad|\det K|=P|\det C|.
\end{equation*}
Applying \eqref{eq:cauchy-permutation-bound}, and then
bounding the selected entry in each column by that column's maximum, we obtain
 \begin{equation*}
 1=|\det K|=P|\det C|\leq P M(C)\Cat_n=M(K)\Cat_n\leq\Cat_n\prod_{j=1}^{n}\|u_j\|_\infty.
\end{equation*}
Taking $n$th roots proves
\begin{equation*}
 \left(\prod_{j=1}^{n}\|u_j\|_\infty\right)^{1/n}
 \geq\Cat_n^{-1/n}.
\end{equation*}
Finally, the central binomial coefficient is the largest
among the $2n+1$ coefficients whose sum is $4^n$. Hence
\begin{equation*}
 \frac{4^n}{(n+1)(2n+1)}
 \le
 \Cat_n
 =\frac{1}{n+1}\binom{2n}{n}
 \leq4^n.
\end{equation*}
It follows that
\begin{equation*}
 \frac14
 \leq\Cat_n^{-1/n}
 \leq\frac{\bigl((n+1)(2n+1)\bigr)^{1/n}}{4}
 \longrightarrow \frac14.
\end{equation*}
\end{proof}

The argument below constructs a general obstruction for the minimal Frobenius preconditioning based on the following idea: in our setting, a small approximation error forces the trigonometric polynomials associated to the basis given by orthogonal Cauchy-like matrices to accumulate the mass where the symbol is constant. On the other hand, we have the rigid interlacing condition that forces column maxima not to be small on average.

Let $\d m=\d t/(2\pi)$ be the normalized Lebesgue measure on $[0,2\pi]\subseteq\R$. For real-valued $f\in C_{2\pi}$, let
\begin{equation*}\label{eq:value-distribution}
 m_f(B)=m\left(\lbrace t\in[0,2\pi]\colon f(t)\in B\rbrace\right),\qquad
 Q_f(r)=\sup_{a\in\R}m_f([a-r,a+r]),\qquad r\ge0,
\end{equation*}
where $B\subseteq\R$ is Borel, and write $s_+=\max\{s,0\}$. The measure $m_f$ is thus the distribution of the values of $f$, while $Q_f(r)$ measures the largest measure of the circle on which $f$ can lie at distance $r$ from one value. These quantities control the asymptotic lower bound of the normalized Frobenius error as follows. If not stated differently, all the integrals must be intended on the interval $[0,2\pi]$.

\begin{teo}\label{teo:concentration-obstruction}
Let $\lbrace K_n\rbrace$ be a sequence of orthogonal Cauchy-like matrices and let $f\in C_{2\pi}$ be real-valued. For every $\delta>0$,
\begin{equation}\label{eq:concentration-obstruction}
 \liminf_{n\to\infty}\frac{1}{n}
 \left\|T_n(f)-\PP_{K_n}\bigl(T_n(f)\bigr)\right\|_{\fro}^2
 \ge
 \delta^2\left(\frac14-\sqrt{Q_f(\delta)}\right)_+^2.
\end{equation}
In particular, if
\begin{equation*}\label{eq:small-atoms}
 \sup_{q\in\R}m_f(\lbrace q\rbrace)<\frac1{16},
\end{equation*}
then
\begin{equation*}\label{eq:positive-obstruction}
 \liminf_{n\to\infty}\frac{1}{n}
 \left\|T_n(f)-\PP_{K_n}\bigl(T_n(f)\bigr)\right\|_{\fro}^2>0.
\end{equation*}
\end{teo}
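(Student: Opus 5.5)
Write $T=T_n(f)$, $K_n=[u_1,\ldots,u_n]$ and $\lambda_k=u_k^TTu_k$. By \eqref{eq:explicit} and the unitary invariance of the Frobenius norm, $\|T-\PP_{K_n}(T)\|_\fro^2=\|K_n^TTK_n-\diag(K_n^TTK_n)\|_\fro^2$. This is the off-diagonal mass, which equals $\sum_k\|(T-\lambda_k I)u_k\|_2^2$. Indeed, the $k$-th column of $K_n^T(T-\lambda_kI)K_n$ is the $k$-th column of $K_n^TTK_n$ with its diagonal entry removed. So the task is to bound each residual $\|(T-\lambda_kI)u_k\|_2^2$ from below. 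We use the symbol representation $\|(T-\lambda I)u\|_2^2\ge$ something like $\int |f-\lambda|^2|\hat u|^2\d m$, where $\hat u(t)=\sum_j u_j\e^{ijt}$ is the trigonometric polynomial associated with $u$. Then $\int|\hat u|^2\d m=\|u\|_2^2=1$.

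\textbf{Step 1.} Pass from $T_n(f)$ to the symbol. We have $u^*T_n(g)u=\int g|\hat u|^2\d m$ exactly. However, $\|T_n(f-\lambda)u\|^2$ is not $\int|f-\lambda|^2|\hat u|^2$, since the truncation drops the part outside the band. The plan is therefore to use $T_n(|f-\lambda|^2)-T_n(f-\lambda)^2$, which is positive semidefinite with trace-norm or Frobenius contributions of size $o(n)$ uniformly in $\lambda$ on a compact set (the standard estimate for $f\in C_{2\pi}$: approximate $f$ by trigonometric polynomials of degree $d$, so that the difference has rank $O(d)$, with a uniform remainder). This gives
\[
\sum_k\|(T-\lambda_kI)u_k\|^2\ge\sum_k\int|f-\lambda_k|^2|\hat u_k|^2\d m-o(n).
\]
I expect this uniform-in-$k$ $o(n)$ error to be the main technical obstacle, since the $\lambda_k$ vary with $k$. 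The way around it is to write $\sum_k u_k^*(T_n(|f-\lambda_k|^2)-T_n(f-\lambda_k)^2)u_k$, expand the square in $\lambda_k$, and note that the $\lambda_k$-dependent terms are $\lambda_k$ times $u_k^*(T_n(f)-T_n(f))u_k=0$. So only $\sum_k u_k^*(T_n(f^2)-T_n(f)^2)u_k=\operatorname{tr}(T_n(f^2)-T_n(f)^2)=o(n)$ survives, which is exactly the known trace result.

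\textbf{Step 2.} Localize the mass. Fix $\delta>0$ and split the circle into $E_k=\{t:|f(t)-\lambda_k|\le\delta\}$ and its complement. Then
\[
\int|f-\lambda_k|^2|\hat u_k|^2\d m\ge\delta^2\Bigl(1-\int_{E_k}|\hat u_k|^2\d m\Bigr),
\]
and $m(E_k)\le Q_f(\delta)$. Next bound the localized mass with $\|\hat u_k\|_\infty$. By Cauchy--Schwarz, $\int_{E}|\hat u|^2\le\sqrt{m(E)}\,\|\hat u\|_{L^4}^2$, and also $\int_E|\hat u|^2\le m(E)\|\hat u\|_\infty^2$. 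To get a bound involving $\|u_k\|_\infty$ (the vector sup-norm) we work with $1-\int_E|\hat u|^2=\int_{E^c}|\hat u|^2$. The inequality $\|u\|_\infty\le\|\hat u\|_{L^1(E^c)}+\|\hat u\|_{L^1(E)}$ (coefficients are bounded by the $L^1$ norm) gives
\[
\|u_k\|_\infty\le\Bigl(\int_{E_k^c}|\hat u_k|^2\Bigr)^{1/2}+\sqrt{m(E_k)}\le\Bigl(\int_{E_k^c}|\hat u_k|^2\Bigr)^{1/2}+\sqrt{Q_f(\delta)}.
\]
Hence $\int_{E_k^c}|\hat u_k|^2\ge(\|u_k\|_\infty-\sqrt{Q_f(\delta)})_+^2$.

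\textbf{Step 3.} Average using Lemma~\ref{prop:geometric-localization}. The function $s\mapsto(\e^{s}-c)_+^2$ is convex, so Jensen in $\log\|u_k\|_\infty$ together with AM--GM gives
\[
\frac1n\sum_k(\|u_k\|_\infty-c)_+^2\ge\Bigl(\bigl(\textstyle\prod_k\|u_k\|_\infty\bigr)^{1/n}-c\Bigr)_+^2\ge\Bigl(\tfrac14-c\Bigr)_+^2,
\]
with $c=\sqrt{Q_f(\delta)}$. Combining this with Steps 1--2, dividing by $n$ and taking the $\liminf$ yields \eqref{eq:concentration-obstruction}.

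\textbf{Step 4.} Obtain the final claim. Since $f$ is continuous, $Q_f(\delta)\downarrow\sup_q m_f(\{q\})$ as $\delta\to0$, by compactness of the range and a standard argument with nested intervals. So if the atoms are smaller than $1/16$, then $\sqrt{Q_f(\delta)}<1/4$ for some small $\delta>0$, and the bound is strictly positive.
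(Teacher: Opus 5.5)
Your proposal is correct and follows essentially the paper's argument. It uses the same identity $\sum_k\int|f-\lambda_k|^2|p_k|^2\d m=E_n^2+nb_n$; you reach it through the $\lambda$-independence of $T_n(f^2)-T_n(f)^2$ and its trace, rather than through $\sum_k|p_k(t)|^2=n$, and your $o(n)$ term is exactly the paper's $nb_n$. It also uses the same localization bound $\|u_k\|_\infty\le\sqrt{Q_f(\delta)}+\delta^{-1}\bigl(\int|f-\lambda_k|^2|p_k|^2\d m\bigr)^{1/2}$, the Catalan lemma, and the same compactness argument for the atom condition. Your averaging by Jensen for the convex map $s\mapsto(\e^{s}-c)_+^2$ in place of AM--GM followed by Cauchy--Schwarz is only a cosmetic variant and gives the same bound.
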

\begin{proof}
Write $K_n=\begin{bmatrix} u_1^{(n)} & \cdots & u_n^{(n)}\end{bmatrix}\equiv\begin{bmatrix} u_1 & \cdots & u_n\end{bmatrix}$, and set
\begin{equation*}
 E_n\coloneqq\left\|T_n(f)-\PP_{K_n}(T_n(f))\right\|_{\fro}.
\end{equation*}
For every $z\in\mathbb C^n$, 
\begin{equation*}\label{eq:toeplitz-quadratic-form}
 z^HT_n(f)z=\int f(t)\left|\sum_{j=1}^nz_j\e^{i(j-1)t}\right|^2 \d m,
\end{equation*}
and $\|T_n(f)\|\leq\|f\|_\infty$. Given the polynomials
\begin{equation*}
 p_k(t)=\sum_{j=1}^n(u_k)_j \e^{i(j-1)t}
\end{equation*}
for $k=1,\ldots,n$, we recall also Parseval and orthogonality properties
\begin{equation}
 \int |p_k|^2 \d m=1,
 \qquad
 \sum_{k=1}^n|p_k(t)|^2=n\qquad\text{for every $t$.}
 \label{eq:fourier-column-identities}
\end{equation}
In particular, we have
\begin{equation*}
 \lambda_k\equiv\lambda_k(f(t),n)=u_k^HT_n(f)u_k=\int f|p_k|^2 \d m\in\R.
\end{equation*}
By using the fact that the Frobenius norm is unitarily invariant, and using a direct computation of the Frobenius norm, we have
\begin{equation}
 E_n^2=\|T_n(f)\|_{\fro}^2-\sum_{k=1}^n\lambda_k^2
 \label{eq:direct-projection-energy}
\end{equation}
Equation~\eqref{eq:direct-projection-energy} subtracts the ``squared means''
$\lambda_k^2$, thus the natural step is analyzing
\begin{equation*}
    \int |f-\lambda_k|^2|p_k|^2 \d m=\|(f-\lambda_k)p_k\|_{L^2}^2,
\end{equation*}
which represents the variance of $f$ under the probability measure $|p_k|^2\d m$. Its square root represents the continuous version of the residual $\|(T_n(f)-\lambda_k I_n)u_k\|_2$. Expanding the square, we have
\begin{align*}
    \int |f-\lambda_k|^2|p_k|^2\d m
    &=\int f^2|p_k|^2dm-2\lambda_k\int f|p_k|^2 
    \d m+\lambda_k^2\int |p_k|^2 \d m\\
    &=\int f^2|p_k|^2 \d m-\lambda_k^2 .
\end{align*}
This is just the moment formulation of the variance of $f$. The second identity in \eqref{eq:fourier-column-identities} removes the individual polynomials, and \eqref{eq:direct-projection-energy} makes the Frobenius error reappear in the following way
\begin{align*}
\sum_{k=1}^n \int |f-\lambda_k|^2|p_k|^2 \d m
 &=\sum_{k=1}^n\left(\int f^2|p_k|^2 \d m-\lambda_k^2\right)\\
 &=n\int f^2 \d m-\sum_{k=1}^n\lambda_k^2=E_n^2+n b_n
\end{align*}
Here, we have
\begin{equation*}
 b_n\coloneqq\int f^2 \d m-\frac{1}{n}\|T_n(f)\|_{\fro}^2=\sum_{\ell\in\mathbb Z}
 \min\left\lbrace\frac{|\ell|}{n},1\right\rbrace|a_\ell(f)|^2
 \ge0,\qquad b_n\longrightarrow0.
\end{equation*}
Indeed, the following key equalities hold
\begin{equation*}
    \|T_n(f)\|_{\fro}^2
 =\sum_{|\ell|<n}(n-|\ell|)|a_\ell(f)|^2\qquad\text{and}\qquad\sum_{\ell\in\mathbb{Z}}|a_\ell(f)|^2=\int f^2 \d m.
\end{equation*}
In particular, the convergence of the sequence $b_n$ then follows by dominated convergence for the summable sequence $|a_\ell(f)|^2$. Fix $\delta>0$ and set
\begin{equation*}
    S_k\coloneqq\lbrace t\colon|f(t)-\lambda_k|\leq\delta\rbrace.
\end{equation*}
By definition, $m(S_k)\leq Q_f(\delta)$. Fourier inversion, the Cauchy-Schwarz inequality and the first identity in \eqref{eq:fourier-column-identities} give, for every $j$,
\begin{align}
 |(u_k)_j|
 & = \left|\int p_k(t)\e^{-i(j-1)t} 
 \d m(t)\right| \leq
 \int_{S_k}|p_k| \d m
 +\int_{S_k^c}|p_k| \d m\notag\\
 & \leq\left(\int_{S_k} \d m\right)^{1/2}
 \left(\int_{S_k}|p_k|^2 \d m\right)^{1/2}
 +\int_{S_k^c}|p_k| \d m\notag\\
 &\leq\sqrt{m(S_k)}
   +\left(\int_{S_k^c}|p_k|^2 \d m\right)^{1/2}\notag\\
 &\leq\sqrt{Q_f(\delta)}+\frac{\left(\int |f-\lambda_k|^2|p_k|^2 \d m\right)^{1/2}}{\delta},\label{norminf}
\end{align}
where, for the last inequality, we used
\begin{equation*}
    \delta^2\int_{S_k^c}|p_k|^2 \d m
    \leq\int_{S_k^c}|f-\lambda_k|^2|p_k|^2 \d m
    \leq\int |f-\lambda_k|^2|p_k|^2 \d m.
\end{equation*}
Using Lemma~\ref{prop:geometric-localization}, the arithmetic-geometric mean inequality, the inequality on $\|u_k\|_{\infty}$ we obtain from \eqref{norminf} and the Cauchy-Schwarz inequality, we get the following chain of inequalities
\begin{align*}
 \Cat_n^{-1/n}
 & \leq\left(\prod_{k=1}^n\|u_k\|_\infty\right)^{1/n}
 \leq\frac1n\sum_{k=1}^n\|u_k\|_\infty\\
 & \leq\sqrt{Q_f(\delta)}+
 \frac1{n\delta}\sum_{k=1}^n
 \left(\int |f-\lambda_k|^2|p_k|^2 \d m\right)^{1/2}\\
 & \leq\sqrt{Q_f(\delta)}+
 \frac{1}{\delta}\left(\frac{1}{n}\sum_{k=1}^n\int |f-\lambda_k|^2|p_k|^2 \d m\right)^{1/2}\\
 & = \sqrt{Q_f(\delta)}+\frac{1}{\delta}\sqrt{\frac{E_n^2}{n}+b_n},
\end{align*}
that gives us the estimate
\begin{equation*}
 \frac{E_n^2}{n}
 \geq\delta^2\left(\Cat_n^{-1/n}
       -\sqrt{Q_f(\delta)}\right)_+^2-{b_n}.
 \label{eq:direct-finite-obstruction}
\end{equation*}
Since $\Cat_n^{-1/n}\to1/4$ and $b_n\to0$, taking the lower limit proves \eqref{eq:concentration-obstruction}.

To conclude, choose $\eta$ such that
\begin{equation*}
    \sup_{q\in\R}m_f(\lbrace q\rbrace)<\eta<\frac{1}{16}.
\end{equation*}
We show that $Q_f(\delta)<\eta$ for some $\delta>0$. If this was not true, $Q_f(1/j)\geq\eta$ for every positive integer $j$ and we may choose $a_j\in\R$ with
\begin{equation*}
    m_f\left([a_j-1/j,a_j+1/j]\right)\geq\eta-1/j.
\end{equation*}
For large $j$, these sets have positive measure. Since $f$ is bounded, the centers $a_j$ are bounded; after passing to a subsequence, $a_j\to a$. For every $\varepsilon>0$, we eventually have
\begin{equation*}
    \lbrace t\in[0,2\pi]\colon|f(t)-a_j|\leq 1/j\rbrace\subseteq\lbrace t\in[0,2\pi]\colon|f(t)-a|\leq\varepsilon\rbrace.
\end{equation*}
It follows that $m_f\left([a-\varepsilon,a+\varepsilon]\right)\geq\eta$. Letting $\varepsilon\downarrow0$ and using continuity from above gives
\begin{equation*}
    m_f\left(\lbrace a\rbrace\right)\geq\eta>\sup_{q\in\R}m_f(\lbrace q\rbrace),
\end{equation*}
and this is a contradiction. Thus $Q_f(\delta)<\eta<1/16$ and we the claim follows.
\end{proof}

\begin{exmp}[The cosine symbol]
Let $f(t)=\cos t$. The substitution $x=\cos t$ and a direct computation show that $f$, seen as a spectral symbol, defines the arcsine distribution (see, for example, \cite{Golinskii-Serra-Jacobi}) and
\begin{equation*}
    Q_{\cos}(\delta)
 =\frac1\pi\int_{1-2\delta}^{1}\frac{\d x}{\sqrt{1-x^2}}
 =\frac{\arccos(1-2\delta)}{\pi}
 =\frac2\pi\arcsin\sqrt{\delta}.
\end{equation*}
In order to force a positive error, we choose $\delta_{\ast}=\sin^2(\pi/50)$, and we obtain
\begin{equation*}
 \liminf_{n\rightarrow\infty}\frac{1}{n}
 \left\|T_n(\cos t)-\PP_{K_n}\left(T_n(\cos t)\right)\right\|_{\fro}^2
 \ge
 \delta_*^2\left(\frac14-\frac15\right)^2
 =\frac{\sin^4(\pi/50)}{400}>0.
\end{equation*}
This provides an explicit obstruction for the cosine.
\end{exmp}

As a consequence, we obtain the following result that completes Corollary~\ref{corosin}.

\begin{coro}
No sequence of matrix algebras $\mathbb{A}(K_n)$ with $K_n$ orthogonal Cauchy-like matrix can ever satisfy the hypotheses \eqref{cond3} of Theorem \ref{teo3}.
\end{coro}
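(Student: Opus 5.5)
We argue by contradiction, combining the explicit cosine obstruction in the Example with the known positive consequence of condition \eqref{cond3}. Suppose some sequence $\{K_n\}$ of orthogonal Cauchy-like matrices satisfies \eqref{cond3} for $g\in\{\cos t,\cos 2t\}$ with $\theta_n\to0$.

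First, apply Theorem~\ref{teo3} to $f(t)=\cos t$, which is continuous, real-valued, even and $2\pi$-periodic. It gives that $\{T_n(\cos t)-\PP_{K_n}(T_n(\cos t))\}_n$ is clustered at zero. Both matrices are real symmetric, so the difference $X_n$ is real symmetric and $\|X_n\|\le 2\|\cos\|_\infty=2$, since $\PP_{K_n}$ is an orthogonal projection in the Frobenius inner product and also preserves spectral norm bounds through \eqref{eq:explicit}.

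Second, convert clustering into smallness of the normalized Frobenius norm. Fix $\varepsilon>0$ and split the eigenvalues of $X_n$ into those with modulus below $\varepsilon$ and the remaining $o(n)$ ones. This gives
\begin{equation*}
\frac1n\|X_n\|_\fro^2=\frac1n\sum_k\lambda_k(X_n)^2\le \varepsilon^2+\frac{4\,o(n)}{n}.
\end{equation*}
Hence $\limsup_n \frac1n\|X_n\|_\fro^2\le\varepsilon^2$ for every $\varepsilon>0$, so the limit is $0$.

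Third, this contradicts the Example following Theorem~\ref{teo:concentration-obstruction}. There, for every sequence of orthogonal Cauchy-like matrices, the same $\liminf$ is at least $\sin^4(\pi/50)/400>0$. Therefore \eqref{cond3} cannot hold.

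The only delicate step is the passage from eigenvalue clustering to vanishing normalized Frobenius norm, which needs the uniform bound on $\|X_n\|$. That bound holds because $\|\PP_{K_n}(A)\|\le\|A\|$: the eigenvalues of $\PP_{K_n}(A)$ are Rayleigh quotients $u_k^TAu_k$. Alternatively, one could bypass Theorem~\ref{teo3} and use \eqref{cond3} for $g=\cos t$ directly. The condition $\max_k|\lambda_k-\cos t_k^{(n)}|\to0$ together with \eqref{eq:direct-projection-energy} gives $\frac1n\sum_k\lambda_k^2\to\int\cos^2 \d m=\frac12$ and $\frac1n\|T_n(\cos t)\|_\fro^2\to\frac12$. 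So $E_n^2/n\to0$, again contradicting the Example.
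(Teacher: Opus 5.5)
Your proof is correct and follows the paper's route: the paper derives the corollary directly from the cosine Example, since \eqref{cond3} would give, via Theorem~\ref{teo3}, clustering at zero for $f=\cos t$, which contradicts the positive lower bound on $\frac1n\|T_n(\cos t)-\PP_{K_n}(T_n(\cos t))\|_\fro^2$. You also supply the step the paper leaves implicit (clustering plus the uniform bound $\|X_n\|\le 2$ forces the normalized Frobenius error to vanish), and your direct route through \eqref{eq:direct-projection-energy} is valid and shows that the $g=\cos t$ part of \eqref{cond3} alone already fails.
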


\subsection{Numerical examples}\label{sec6}

We conclude this section by testing our theoretical results. These computations provide visual evidence of the impossibility of satisfying the Korovkin test.
Let $K_n=D_v\cau(x,y)D_w$ be a $n\times n$ orthogonal Cauchy-like matrix. We compute the eigenvalues of the projection $\PP_{K_n}(T_n(f))$, or equivalently $\lambda_k(f(t),n)$ for $k=1,\ldots,n$, where the symbol $f$ can be the Heaviside function (extended periodically) $f=0$ on $(-\pi,0)$, $f=1$ on $(0,\pi)$, and $f=1/2$
at the discontinuities, $f(t)=\cos t$ or $f(t)=\cos t+\cos(2t)/2$, for four different choices of nodes:
\begin{enumerate}[\normalfont i.]
    \item (uniform) $x_k=\frac{k}{n}$, $y_k=\frac{k-1/2}{n}$, vector $w$ with positive entries and signs of the vector $v$ that obeys the rule $[1;1;-1;-1]$,
    \item (uniform-arit.\ prog.) $x_k=\frac{k}{n}$, $y_k=x_k-\frac{1/3+s_k/6}{n}$, vector $w$ with positive entries and signs of the vector $v$ that obeys the rule $[1;-1;1]$,
    \item (uniform-trig.) $x_k=\frac{k}{n}$, $y_k=x_k-\frac{\arcsin\left((1+\sin^{2}(s_k\pi/2))\pi/16\right)}{n\pi}$, vector $w$ with positive entries and signs of the vector $v$ that obeys the rule $[1;1;-1;-1]$,
    \item (trig.) $x_k=\cos\left(\frac{(n-k+1/2)\pi}{n}\right)$, $y_k=\cos\left(\frac{(n-k+1)\pi}{n}\right)$ and vectors $w$ and $v$ with positive entries.
\end{enumerate}
In cases ii.\ and iii.,
$s_k=(k-1)/n$, for $k=1,\ldots,n$. Case iv.\ is also investigated in \cite[Section 5]{fasino-cauchy}. We set the dimension to $n=600$. The evaluations of the Toeplitz symbols and the eigenvalues of the projection are normalized and rearranged in monotone non-decreasing order in Figure~\ref{fig}. As it can be observed, the eigenvalues fail to approximate the corresponding symbols, both in the case of smooth generating functions and in the case of the locally constant Heaviside function. This is in line with the Korovkin theory for which a classical dichotomy holds: either the approximation stands for any $f$ at least continuous and $2\pi$ periodic or it fails practically always, with few exceptions including the constant function.

\begin{figure*}[!ht]
\centering
\includegraphics[width=.28\textwidth]{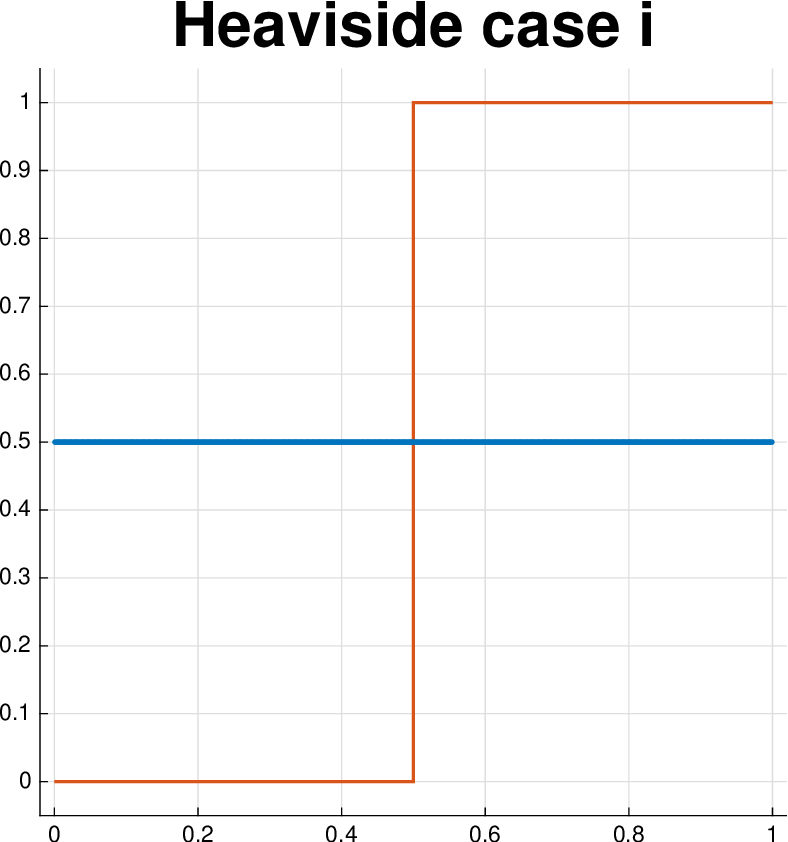}
\includegraphics[width=.28\textwidth]{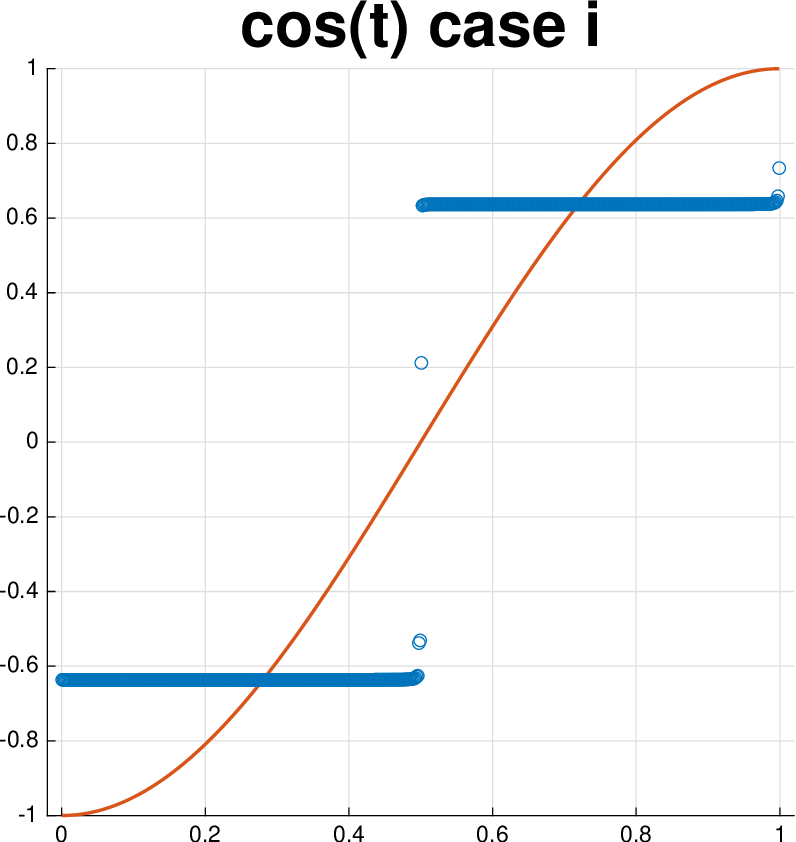}
\includegraphics[width=.28\textwidth]{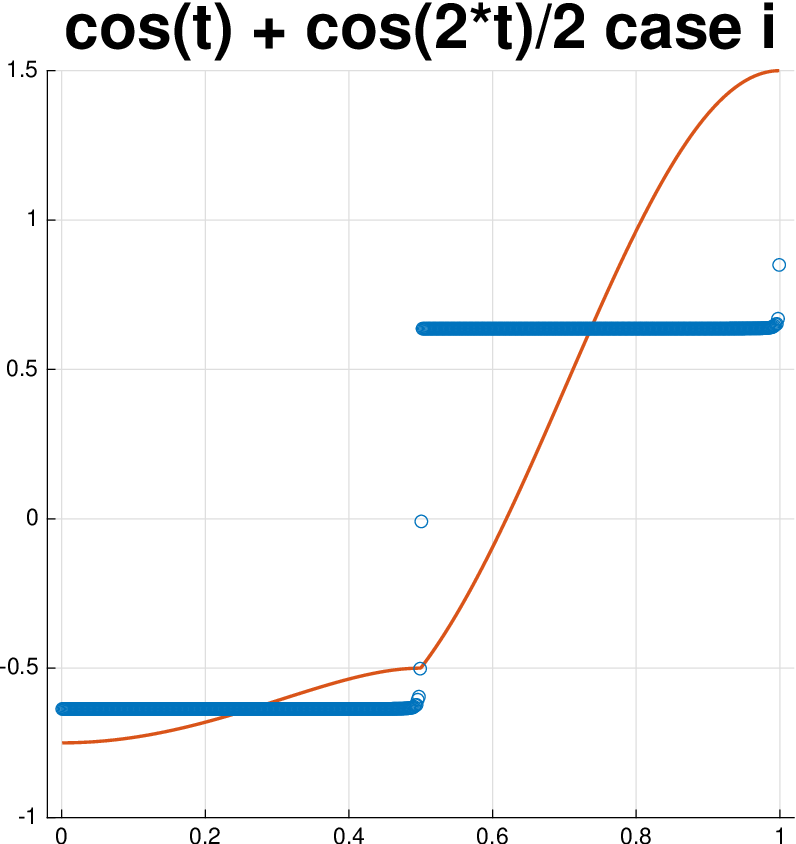}\\
\includegraphics[width=.28\textwidth]{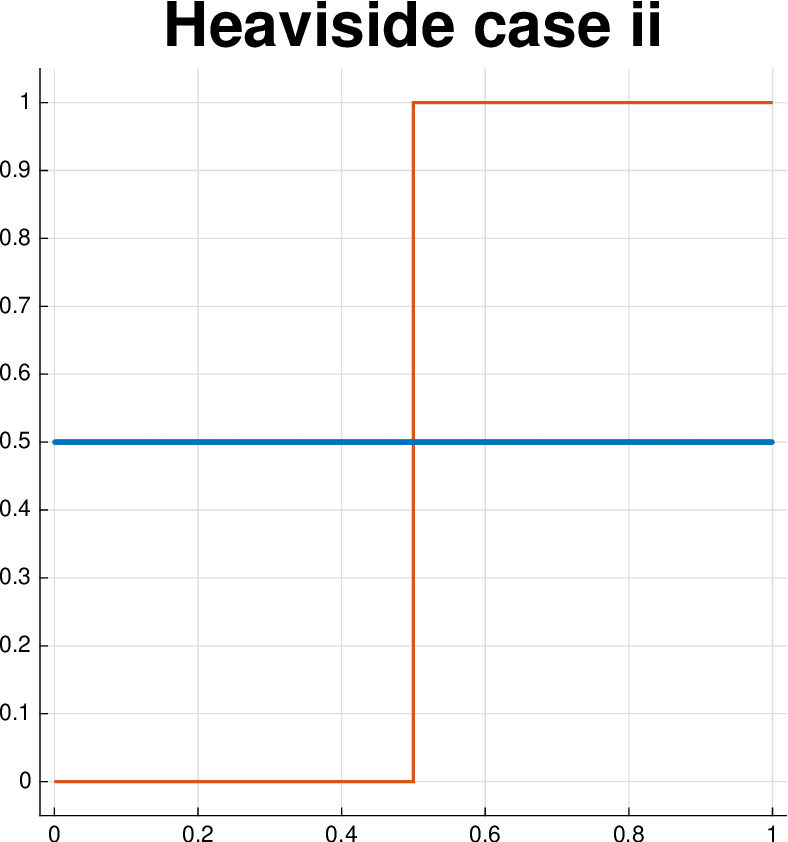}
\includegraphics[width=.28\textwidth]{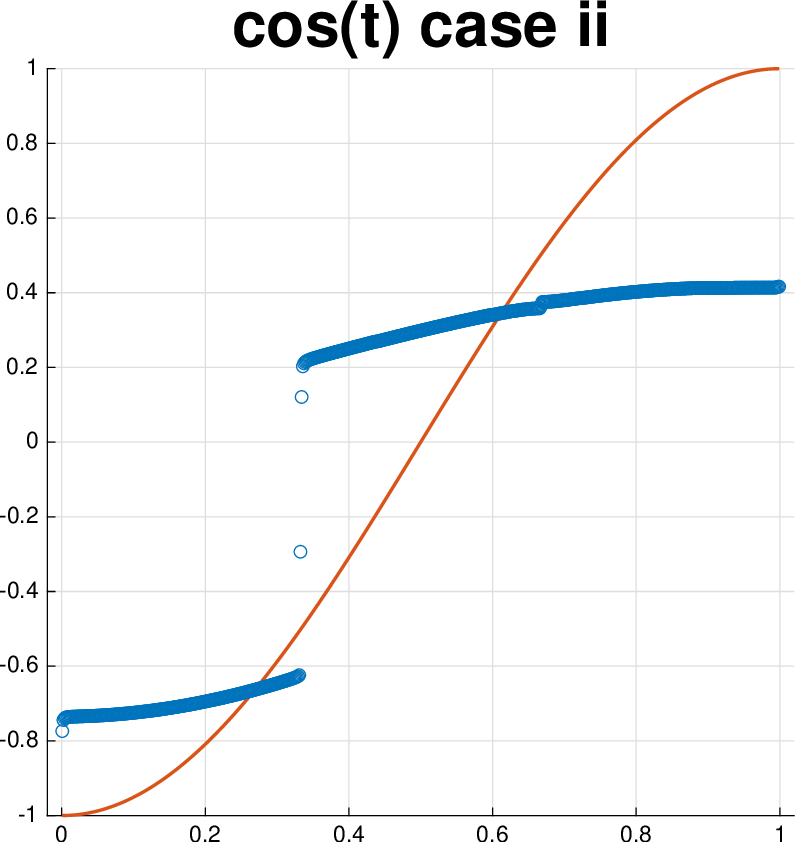}
\includegraphics[width=.28\textwidth]{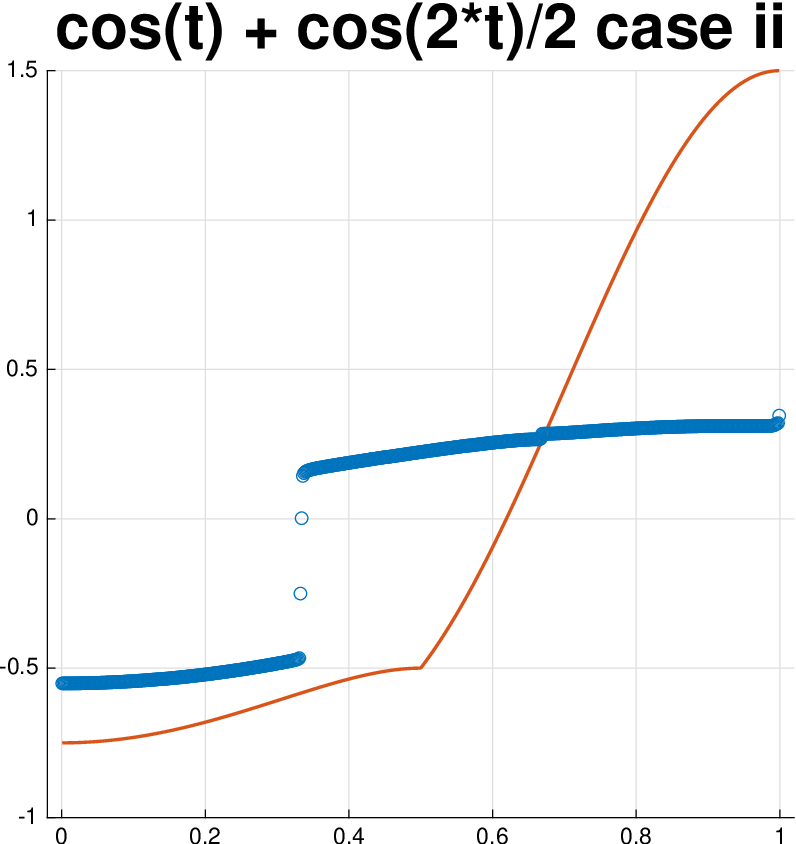}\\
\includegraphics[width=.28\textwidth]{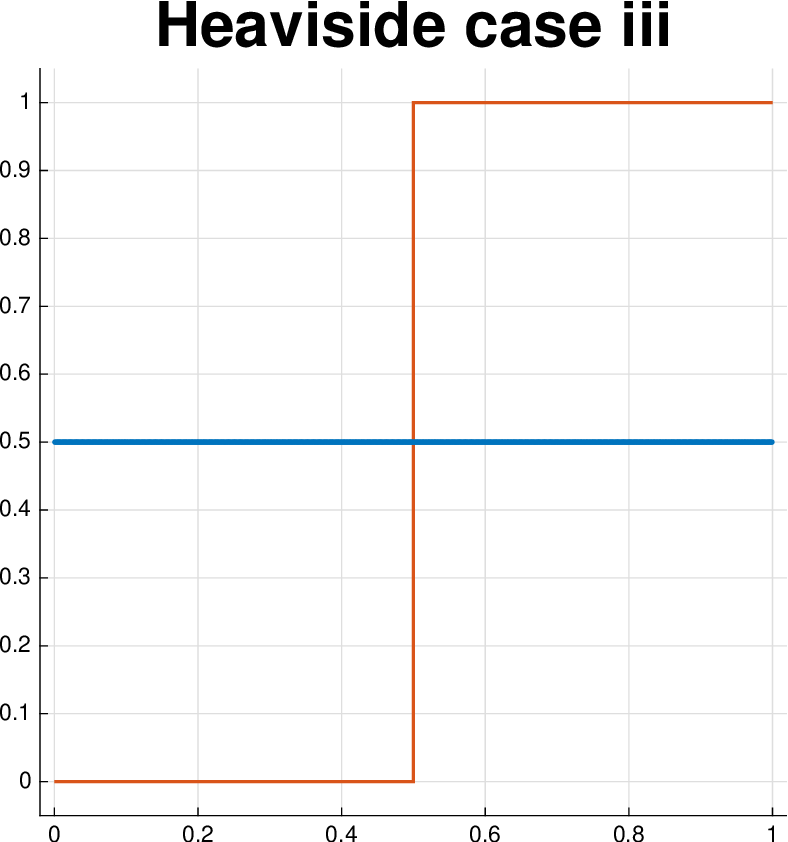}
\includegraphics[width=.28\textwidth]{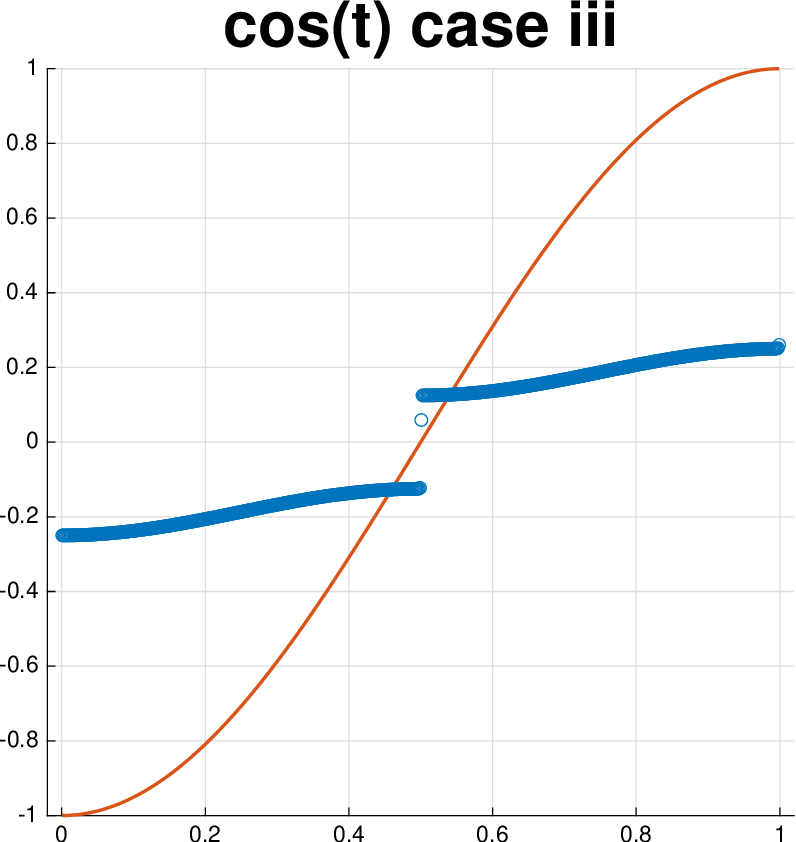}
\includegraphics[width=.28\textwidth]{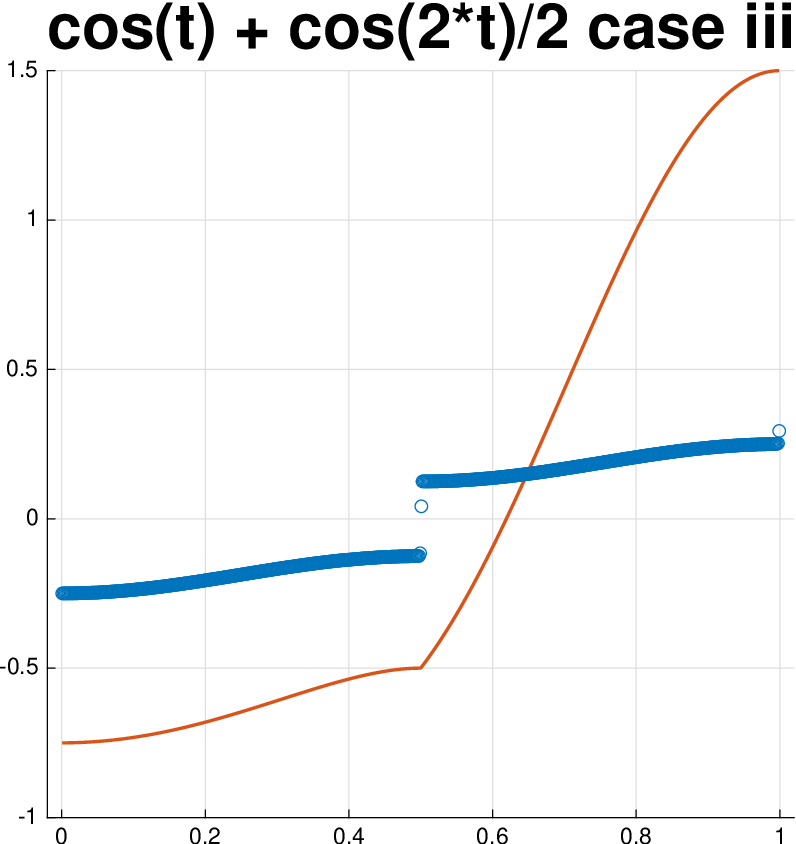}\\
\includegraphics[width=.28\textwidth]{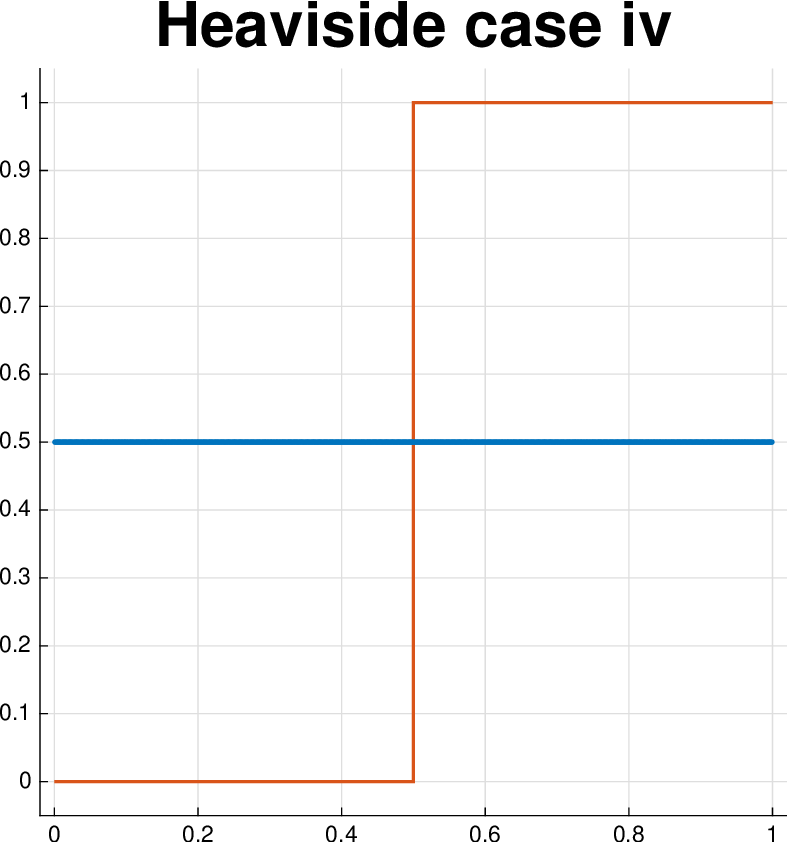}
\includegraphics[width=.28\textwidth]{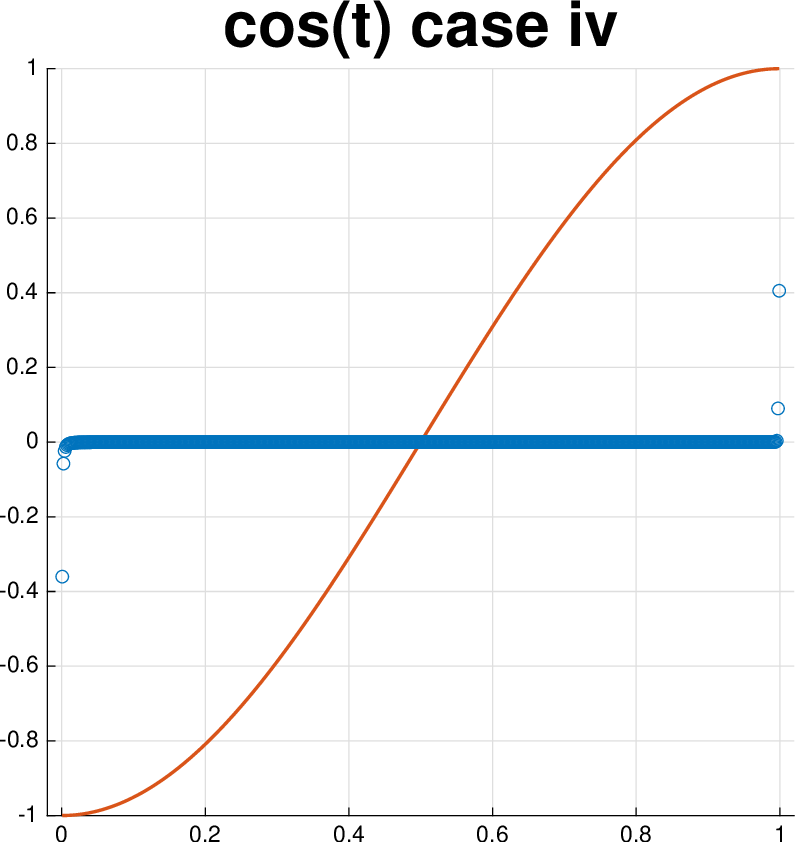}
\includegraphics[width=.28\textwidth]{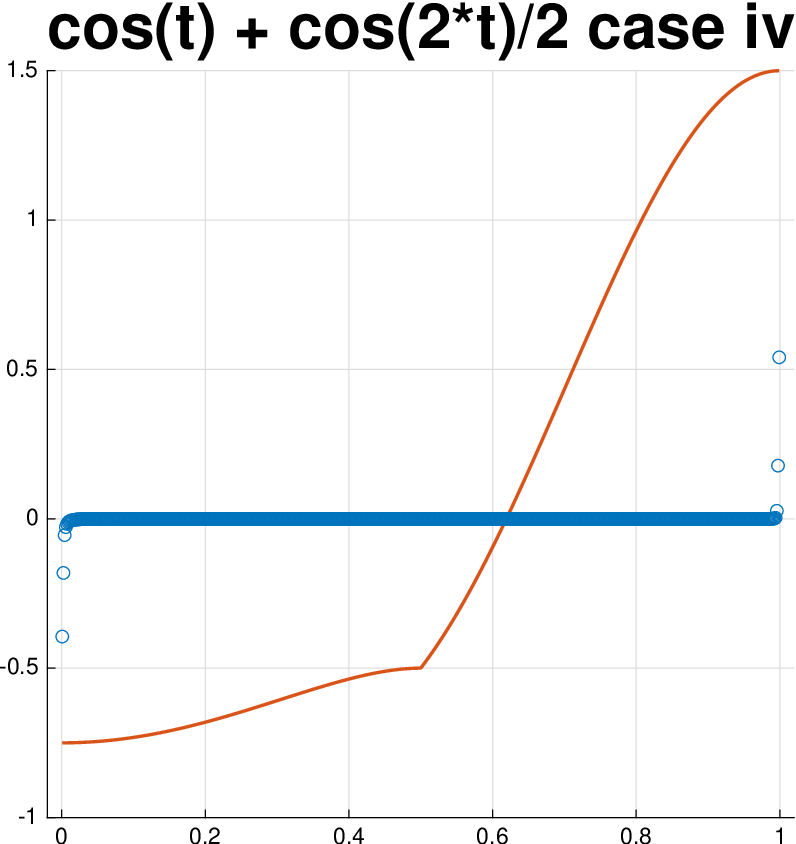}
\caption{Monotone rearrangement comparison of the symbols and the projected eigenvalues normalized on the interval $[0,1]$ for the four choices of nodes.}\label{fig}
\end{figure*}

\section{Conclusion}\label{sec:end}

In the present work we have considered newly designed matrix algebras associated with Cauchy orthogonal transforms. We tested the approximation of Toeplitz matrices
by these algebras, in the spirit of finding optimal methods based on preconditioning. We used the Korovkin theory to check such approximability but, unfortunately, the Korovkin test turned out to be negative.
This is quite surprising given the fact that the Korovkin test was proven to be successful also in the block and multilevel Toeplitz setting, for all the known trigonometric, Fourier-like, and Hartley algebras \cite{Koro-algebra1,koro1,Koro-block,Koro-4-fourier}. Hence, for a given Cauchy algebra and for a matrix size $n$ large enough, it is true that the linear space of the Toeplitz matrices is almost orthogonal to the approximation space constituted by the considered Cauchy algebra. It is also worth noticing that other negative approximation results in the multilevel Toeplitz setting can be found in \cite{nega-gen,SeTy2}.
Still a few questions remain open and among them we mention the following:
\begin{itemize}
  \item Can the algebra generated by a product of a certain number of Cauchy-like matrices approximates the space of Toeplitz matrix sequences with $L^1$ generating functions? Can this be obtained by considering matrices with higher displacement rank?
  \item Given a given Cauchy algebra, is there a way to understand a priori what kind of structured matrices can be approximated by that algebra?
  \item Can successful approximation results be obtained by exploiting complex unitary Cauchy-like matrices?
\end{itemize}
In conclusion we think that the present contribution open the door to a quite rich path of future investigations.

\backmatter

\bmhead{Acknowledgements}

The work of all the authors is supported by the Italian Agency INdAM-GNCS.
The work of Stefano Serra-Capizzano is funded from the European High-Performance Computing Joint Undertaking  (JU) under grant agreement No 955701. The JU receives support from the European Union’s Horizon 2020 research and innovation programme and Belgium, France, Germany, Switzerland.
Furthermore Stefano Serra-Capizzano is grateful for the support of the Laboratory of Theory, Economics and Systems – Department of Computer Science at Athens University of Economics and Business.

%
%
%
%


\end{document}